\newcommand{\doublespacing}{\let\CS=\@currsize\renewcommand{\baselinestretch}{1.4}\tiny\CS}
\newtheorem{Theorem}{Theorem}[section]
\newtheorem{Lemma}[Theorem]{Lemma}
\newtheorem{Corollary}[Theorem]{Corollary}
\newtheorem{Proposition}[Theorem]{Proposition}
\newtheorem{Remark}[Theorem]{Remark}
\newtheorem{Example}[Theorem]{Example}
\newtheorem{Definition}[Theorem]{Definition}
\numberwithin{equation}{section}
\begin{document}
	
\title{Cohen-Macaulayness of Total Simplicial
Complexes}
\author{{\bf Najam Ul Abbas$^\ast$, Imran Ahmed$^\ast$ and Ayesha Kiran$^\ast$}\\
		{\small $^\ast$  Department of Mathematics, COMSATS University Islamabad, Lahore Campus, Lahore, Pakistan}\\
		{\small E-mail: drimranahmed@cuilahore.edu.pk}}
	\date{}
	\maketitle
	\hrule
\begin{abstract}
We first construct the total simplicial complex (TSC) of a finite simple graph $G$ in order to generalize the total graph $T(G)$. We show that $\Delta_T(G)$ is not Cohen-Macaulay (CM) in general. For a connected graph $G$, we prove that the TSC is Buchsbaum. We demonstrate that the vanishing of first homology group of TSC associated to a connected graph $G$ is both a necessary and sufficient condition for it to be CM. We find the primary decomposition of the TSC associated to a family of friendship graphs $F_{5n+1}$ and prove it to be CM.
\end{abstract}
{\it Key Words}: total graph; simplicial complex; homology; Cohen-Macaulay.\\
{\it 2010 Mathematics Subject Classification}: Primary 13F55, 13H10; Secondary 13F20, 13D02.
\hrule


\section{Introduction}

In the mid-1970s, the idea of Cohen-Macaulay (CM) complexes appeared from the work of Hochster, Reisner, Stanley \cite{Ho,Re,S}, quickly becoming the main subject of an appealing and richly connected new field of mathematics at the nexus of combinatorics, commutative algebra, and topology. Stanley, as the primary architect of these developments, has made significant contributions over a long period of time. The CM complex theory has vast applications to various concepts in combinatorics, for instance matroids, polytope boundaries, geometric lattices, and intersection lattices of hyperplane arrangements, etc.

The simplicial complex $\Delta$ has immense applications in commutative algebra and algebraic topology. Due to several combinatorial and topological characteristics  of the connectivity of simplicial complexes, insightful research in topology has long been done \cite{AM,F,MABY,MAL}. Commutative algebra properties like CM and Buchsbaum are transformed into simplicial complexes via the Stanley-Reisner ring $K[\Delta]$ with $K$ a field. A commutative ring is said to be Cohen-Macaulay (CM) if its depth and Krull dimension are the same. A simplicial complex $\Delta$ is said to be CM over $K$ if $K[\Delta]$ is CM. 

The CM is a topological property. Combinatorial and geometric methods can be used to investigate their homology groups. In many fields of bioinformatics and image processing, the homology has applications \cite{C}. Reisner provided a comprehensive characterization of such complexes in 1974. In order for a simplicial complex $\Delta$ to be CM over $K$, all the reduced homology groups for the links of faces $\sigma\in\Delta$ with coefficients in $K$ must be zero, with the exception of the top-dimensional one \cite{Re}. The set of all faces $\tau\in\Delta$ makes up the link of a face $\sigma$ in $\Delta$ if $\tau\cup\sigma\in\Delta$ and $\tau\cap\sigma=\emptyset$. Every CM simplicial complex is unmixed \cite{F1}.

For a non-negative integer $t$, Haghighi, Yassemi, and Zaare-Nahandi discovered the class of CM simplicial complexes in codimension $t$, denoted by $CM_t$ \cite{Ha2}. The CM and Buchsbaum are two properties for simplicial complexes that this class generalizes.  The $CM_t$ characteristic is a topological invariant.

A graph can be considered as a one dimensional simplicial complex. The total simplicial complex (TSC) $\Delta_T(G)$ is a generalization of total graph $T(G)$ for a finite simple graph $G$ (see Definition \ref{d1}). The TSC contains line simplicial complex (LSC) \cite{AM} and Gallai simplicial complex (GSC) \cite{MAL} as subcomplexes. A total graph $T(G)$ represents adjacencies between edges and vertices of $G$. Behzad and Chartrand \cite{BC} were the first to propose the concept of total graph. Numerous authors \cite{B2,B3,B1,BR2,BR1,GSG} have conducted extensive research on the characteristics of total graphs.

We show that $\Delta_T(G)$ is not CM in general, see Example \ref{e1}. We demonstrate in Theorem \ref{t2} that the TSC of a connected graph $G$ is Buchsbaum.  In Theorem \ref{t3}, we establish that the vanishing of first homology group of $\Delta_T(G)$ associated to a connected graph $G$ is both a necessary and a sufficient condition for TSC to be CM. Finally, we find the primary decomposition of $\Delta_T(F_{5n+1})$ associated to a family of friendship graphs $F_{5n+1}$ and prove it to be CM.

\section{Basic setup}

We refer to \cite{F1,F,S1} for introductory definitions and background information on simplicial complexes. For basic definitions on simplicial homology, we refer to \cite{R}.

We consider the family of subsets over $[m]=\{1,\ldots,m\}$ that forms the simplicial complex $\Delta$  with $\tau\subset \sigma\in \Delta$ implies that $\tau\in\Delta$. The members of $\Delta$ are considered as faces of $\Delta$, and any face $\sigma\in\Delta$ has a dimension, defined as $\dim\sigma=|\sigma|-1$, where $|\sigma|$ is the number denoting how many vertices there are in $\sigma$. The count of $k$-dimensional faces in $\Delta$, denoted by $\alpha_k$,  determines the $f$-vector $(\alpha_0,\alpha_1,\ldots,\alpha_k,\ldots)$ of $\Delta$. 

For a simplicial complex $\Delta$, the maximal faces under inclusion are referred as its facets. If $\{\sigma_1,\ldots ,\sigma_h\}$ is the set of all the facets of $\Delta$, then $\Delta=<\sigma_1,\ldots,\sigma_h>$. A subset of $\Delta$ is said to be the subcomplex of $\Delta$ if it is a simplicial complex as a whole. A simplicial complex $\Delta$ is known as pure if all facets of $\Delta$ have the same dimension. We represent the dimension of $\Delta$ by $\dim\Delta=max\{\dim \sigma\ |\ \sigma\in\Delta\}$.

A simplicial complex $\Delta$ is termed to be connected if it has a succession of facets
$\sigma_0=\sigma,\ldots,\sigma_i=\bar{\sigma}$ such that $\sigma_j\cap \sigma_{j+1}\neq
\emptyset$ with $j=0,\ldots,i-1$ for any two of its facets $\sigma$ and $\bar{\sigma}$. If a simplicial complex $\Delta$ is not connected, it is said to be disconnected.

We consider $\Delta$ a simplicial complex on the vertex set $[m]=\{y_1,\ldots,y_m\}$ with facets $\sigma_1,\ldots,\sigma_h$. An ideal generated by all square-free monomials associated to non-faces of $\Delta$ is known as Stanley-Reisner ideal $I_{\mathcal{N}}(\Delta)$. The Stanley-Reisner ring $K[\Delta]$ is acquired from polynomial ring $K[y_1,\ldots,y_m]$ via quotienting out the Stanley-Reisner ideal. 

\begin{Definition}\label{dd}
Let $[m]=\{1,\ldots,m\}$ be a vertex set of a simplicial complex $\Delta$. A vertex cover for $\Delta$ is a collection $C$ of $[m]$ with the condition that there is a vertex $v \in C$ such that  $v\in \sigma_j$ for every facet $\sigma_j$. A subset $C$ of $[m]$ that is a vertex cover, and for which no proper subset of $C$ is a vertex cover for $\Delta$, is a minimal vertex cover of $\Delta$.  If every minimal vertex cover of $\Delta$ has the same cardinality, the complex is said to be unmixed.
\end{Definition}

The sequence of free abelian groups and homomorphisms
\begin{center}
$\ldots \rightarrow C_{r}(\Delta) ^{
\underrightarrow{\partial_{r}}}\ldots ^{\underrightarrow{\partial_{2}}}C_{1}({\Delta})^{
\underrightarrow{\partial_{1}}}C_{0}({\Delta})^{\underrightarrow{\partial_{0}}}0$,
\end{center}
is called a chain complex of $\Delta$ with $\partial_{i}\circ\partial_{i+1}=0$, denoted by $C_{\ast}(\Delta)$. For each $r\geq 0$, we define $C_r(\Delta)$ as a free abelian group with basis containing all $r$-dimensional faces in $\Delta$. The elements of $C_r(\Delta)$ are called $r$-chains in $\Delta$. The homomorphisms $\partial_r$ are known as
boundary operators and they are defined by
\begin{equation}\label{eq1}
\partial_{r}(\sigma_{ i_0,\ldots,i_r})=\sum_{j=0}^{r}(-1)^{j}\sigma_{i_0,\ldots,\hat{i_j},\ldots,i_r},
\end{equation}
where $\hat{i_j}$ denotes the vertex $i_j$ that will be removed from the array. The groups of $r$-cycles and $r$-boundaries, $\mathcal{Z}_r(\Delta)$ and $\mathcal{B}_r(\Delta)$, are expressed as $Ker\ \partial_{r}$ and  $Im\ \partial_{r+1}$, respectively. For each $r\geq 0$, we define the $r$-th homology group of a simplicial complex $\Delta$ by
\begin{equation}\label{eq2}
H_{r}(\Delta)=\mathcal{Z}_r(\Delta)/\mathcal{B}_r(\Delta).
\end{equation}
The $r$-th Betti number, represented by $\beta_r$, is written as the rank of $H_{r}(\Delta)$.

The homology groups of the augmented chain complex
$$\ldots \rightarrow C_{r}(\Delta) ^{\underrightarrow{\partial_{r}}}\ldots
^{\underrightarrow{\partial_{2}}}C_{1}({\Delta})^{
\underrightarrow{\partial_{1}}}C_{0}({\Delta})^{\underrightarrow
{\varepsilon}}{K}\rightarrow 0,$$
where $\varepsilon({\Sigma}_{i}(-1)^i{v_i})={\Sigma}_i(-1)^i$, are known as reduced homology groups $\tilde{H}_r(\Delta)$ such that $\Delta$ is a simplicial complex. We consider $\tilde{H}_0(\Delta)=Ker\ \varepsilon/Im\ \partial_1$ and $\tilde{H}_r(\Delta)=\mathcal{Z}_r(\Delta)/\mathcal{B}_r(\Delta)$ for positive $r$. That is, $H_0(\Delta)=\tilde{H}_0(\Delta)\oplus {K}$ and $H_r(\Delta)=\tilde{H}_r(\Delta)$ for $r>0$.

The following topological characterization for a CM simplicial complex is due
to Reisner \cite{Re}.

\begin{Theorem}\cite{Re}\label{t} The simplicial complex $\Delta$ is
CM over a field $K$ if and only if for all $\sigma\in
\Delta$ and $ r< dim\ (link_{\Delta}(\sigma)),\ \widetilde{H}_r\,(link_{\Delta}(\sigma); K) = 0,$
where
$link_{\Delta}(\sigma)=\{\tau\in\Delta\ |\ \tau\cap\sigma=\emptyset \mbox{ and } \tau\cup\sigma\in\Delta\}$
is a link of $\Delta$ at $\sigma$ and $\widetilde{H}_r\ (link_{\Delta}(\sigma); K)$ the $r$-th reduced homology group of $link_{\Delta}(\sigma)$ over $K$.
\end{Theorem}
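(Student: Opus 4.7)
My plan is to derive Reisner's criterion from Hochster's formula for the $\ZZ^m$-graded local cohomology of $K[\Delta]$, together with the standard local-cohomology characterization of Cohen-Macaulayness. Let $R=K[\Delta]$, let $\mathfrak{m}=(y_1,\ldots,y_m)$ be its irrelevant maximal ideal, and write $d=\dim R=\dim\Delta+1$. As the algebraic input I would recall that $R$ is CM if and only if $H^i_{\mathfrak{m}}(R)=0$ for every $i<d$; this is the translation of ``depth equals Krull dimension'' via Grothendieck vanishing.

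The core step is to invoke Hochster's formula: for every $i\geq 0$ and every $a\in\ZZ^m$, the multigraded piece $[H^i_{\mathfrak{m}}(R)]_a$ vanishes unless $a=-\chi_\sigma$ for some face $\sigma\in\Delta$ (where $\chi_\sigma$ is the $0/1$ characteristic vector of $\sigma$), in which case
\[
\dim_K [H^i_{\mathfrak{m}}(R)]_{-\chi_\sigma}=\dim_K\widetilde{H}_{i-|\sigma|-1}(link_\Delta(\sigma);K).
\]
To establish this I would write out the \v{C}ech complex on $y_1,\ldots,y_m$ computing $H^\bullet_{\mathfrak{m}}(R)$ and identify, multidegree by multidegree, its strand at $-\chi_\sigma$ with a degree shift of the augmented simplicial chain complex of $link_\Delta(\sigma)$ (faces of the link correspond to the squarefree monomials supported on $[m]\setminus\sigma$ whose union with $\sigma$ lies in $\Delta$). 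This combinatorial-algebraic identification is the heart of the argument.

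With Hochster's formula in hand, the remainder is bookkeeping. The vanishing $H^i_{\mathfrak{m}}(R)=0$ for all $i<d$ becomes $\widetilde{H}_{i-|\sigma|-1}(link_\Delta(\sigma);K)=0$ for every $\sigma\in\Delta$ and every $i<d$; setting $r=i-|\sigma|-1$, this reads $\widetilde{H}_r(link_\Delta(\sigma);K)=0$ for every $r<d-|\sigma|-1=\dim\Delta-|\sigma|$. Since $\dim link_\Delta(\sigma)\leq\dim\Delta-|\sigma|$ and the reduced homology of any complex vanishes strictly above its top dimension, enlarging the range from $r<\dim link_\Delta(\sigma)$ up to $r<\dim\Delta-|\sigma|$ imposes no extra conditions, so the two formulations coincide and give exactly the statement of the theorem.

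\textbf{Main obstacle.} The substantive work lies entirely in proving Hochster's formula---specifically, recognizing the \v{C}ech strand in multidegree $-\chi_\sigma$ as a shifted chain complex of $link_\Delta(\sigma)$. Once that identification is in place, both the Cohen-Macaulay-via-local-cohomology criterion and the final re-indexing are essentially formal.
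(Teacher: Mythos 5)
The paper does not prove this theorem at all: it is Reisner's criterion, quoted verbatim with a citation to \cite{Re}, so there is no in-paper argument to compare yours against. Judged on its own terms, your route --- the local-cohomology characterization of depth plus Hochster's formula for the $\ZZ^m$-graded pieces of $H^i_{\mathfrak m}(K[\Delta])$ --- is the standard modern proof and the core of it is sound. (Two harmless imprecisions: the graded pieces of $H^i_{\mathfrak m}$ are nonzero in \emph{all} multidegrees $a\le 0$ whose support is a face, not only at $a=-\chi_\sigma$, but the dimension depends only on the support, so your vanishing argument is unaffected; and Hochster's formula naturally produces reduced \emph{cohomology} of the link, which over a field has the same dimension as homology.)

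There is, however, a genuine gap in your final ``bookkeeping'' step. Hochster's formula converts $H^i_{\mathfrak m}(R)=0$ for $i<d$ into the condition $\widetilde{H}_r(link_\Delta(\sigma);K)=0$ for all $\sigma$ and all $r<d-|\sigma|-1=\dim\Delta-|\sigma|$, whereas the theorem requires it only for $r<\dim link_\Delta(\sigma)$. These ranges differ whenever $\dim link_\Delta(\sigma)<\dim\Delta-|\sigma|$, and your justification --- that reduced homology vanishes strictly above the top dimension --- covers only $r>\dim link_\Delta(\sigma)$; it says nothing about the critical degree $r=\dim link_\Delta(\sigma)$ itself, where the top homology of an undersized link can very well be nonzero. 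So the implication you actually need (Reisner's condition with the smaller range implies the Hochster-range vanishing, hence CM) is not automatic. The missing ingredient is the lemma that a complex satisfying Reisner's condition is pure --- proved, e.g., by induction on dimension: the condition passes to vertex links since $link_{link_\Delta(v)}(\tau)=link_\Delta(\{v\}\cup\tau)$, so each vertex link is pure by induction, hence all facets through a common vertex have equal dimension, and connectedness (the $\sigma=\emptyset$, $r=0$ case) chains this equality across all facets. Purity then forces $\dim link_\Delta(\sigma)=\dim\Delta-|\sigma|$ for every face, after which the two ranges coincide and your argument closes.
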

The $CM_t$ simplicial complex was introduced in \cite{Ha2}.
\begin{Definition}\cite{Ha2}
Assume that $\Delta$ is a simplicial complex of dimension $d-1$. Consider $t$ an integer such that $0\leq t\leq d-1$. When $\Delta$ is
pure and $link_{\Delta}(\sigma)$ is CM over over a field $K$ for any $\sigma$ in $\Delta$ with $\#\sigma\geq t$, $\Delta$ is then known as $CM_t$ over $K$.
\end{Definition}

According to Reisner \cite{Re} and Schenzel \cite{Sc}, $CM_0$ is the CM property, while $CM_1$ the Buchsbaum.

Haghighi, Yassemi and Zaare-Nahandi established the following topological characterization of a $CM_t$ simplicial complex \cite{Ha2}.

\begin{Theorem}\cite{Ha2}\label{tt}
The simplicial complex $\Delta$ of dimension $d-1$ is $CM_t$ over a field $K$ if and only 
it is pure and for all $\sigma\in\Delta$, $\widetilde{H}_r\,(link_{\Delta}(\sigma); K)=0$  such that $\#\sigma\geq t$ and $r<d-\#\sigma-1$.
\end{Theorem}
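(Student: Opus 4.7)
The plan is to reduce the statement to Reisner's criterion (Theorem \ref{t}) applied to each link. The only combinatorial identity I need is that if $\sigma,\tau\in\Delta$ are disjoint with $\sigma\cup\tau\in\Delta$, then $link_{link_\Delta(\sigma)}(\tau)=link_\Delta(\sigma\cup\tau)$; this falls out immediately from the defining conditions of a link by expanding both sides. Combined with purity of $\Delta$, which forces $\dim(link_\Delta(\rho))=d-\#\rho-1$ for every face $\rho$ (since every face extends to a facet of size $d$), essentially no further combinatorial bookkeeping is required.

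For the forward direction, I would assume $\Delta$ is $CM_t$. Then $\Delta$ is pure by definition, and for every $\sigma\in\Delta$ with $\#\sigma\geq t$ the complex $link_\Delta(\sigma)$ is CM. Applying Reisner's criterion to $link_\Delta(\sigma)$ with the empty face yields $\widetilde{H}_r(link_\Delta(\sigma);K)=0$ for all $r<\dim(link_\Delta(\sigma))=d-\#\sigma-1$, which is exactly the conclusion demanded.

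For the converse, assume $\Delta$ is pure and satisfies the stated vanishing hypothesis. I would fix $\sigma_0\in\Delta$ with $\#\sigma_0\geq t$ and verify Reisner's criterion for $link_\Delta(\sigma_0)$: for each $\tau\in link_\Delta(\sigma_0)$ and each $r<\dim(link_{link_\Delta(\sigma_0)}(\tau))$, I set $\sigma:=\sigma_0\cup\tau$, observe that $\#\sigma\geq\#\sigma_0\geq t$, use the link-of-link identity to rewrite $link_{link_\Delta(\sigma_0)}(\tau)=link_\Delta(\sigma)$, and use purity to rewrite the dimension bound as $r<d-\#\sigma-1$. The hypothesis then supplies the vanishing of $\widetilde{H}_r(link_\Delta(\sigma);K)$, so $link_\Delta(\sigma_0)$ is CM; since this holds for every such $\sigma_0$, $\Delta$ is $CM_t$.

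The main obstacle, such as it is, lies in the dimension bookkeeping: one must argue cleanly that $\dim(link_\Delta(\sigma))$ equals \emph{exactly} $d-\#\sigma-1$ rather than something strictly smaller, since Reisner's criterion on the sublink and the statement of the theorem need to match numerically. This is precisely where purity of $\Delta$ (and the fact that purity passes to links once CM-ness of the link is established in the forward direction) is used decisively; without it the two dimension bounds could diverge and the equivalence would fail.
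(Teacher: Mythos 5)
Your argument is correct: the paper states Theorem \ref{tt} as a cited result from \cite{Ha2} and gives no proof of its own, so there is nothing to diverge from, and your reduction to Reisner's criterion via the identity $link_{link_\Delta(\sigma)}(\tau)=link_\Delta(\sigma\cup\tau)$ together with the fact that purity forces $\dim(link_\Delta(\rho))=d-\#\rho-1$ is exactly the standard derivation one would expect. Both directions check out, including the dimension bookkeeping you flag as the only delicate point.
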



\section{CM Property of Total Simplicial Complexes}

Behzad and Chartrand's description for total graph $T(G)$ of a finite simple graph $G$ serves as the central aspect of this work \cite{BC}.

\begin{Definition}\cite{BC}
We suppose that  $G$ is a finite simple graph containing vertex set $[m]$ and edge set $E(G)$. For a total graph $T(G)$ of $G$ with vertex set $[m]\cup E(G)$, two vertices are adjacent in $T(G)$ if and only if they are adjacent or incident in $G$.
\end{Definition}

For the structural analysis of TSC, we define total indices corresponding to a finite simple graph $G$.

\begin{Definition}\label{d0}
We assume that  $G$ is a finite simple graph containing vertex set $[m]$ and edge set $E(G)$. The set of all total indices $\Upsilon_{T}(G)$ of $G$ is defined as a collection of subsets of $[m]\cup E(G)$ with $\sigma_{p,q,r}=\{p,q,r\}\in \Upsilon_{T}(G)$ if
\noindent (i) $p, q, r$ are adjacent vertices in $G$; or
\noindent (ii) $p, q, r$ are adjacent edges in $G$; or
\noindent (iii) $p$ and $q$ are adjacent vertices and $r$ is an incident edge to $p$ or $q$; or
\noindent (iv) $p$ and $q$ are adjacent edges and $r$ is an incident vertex to $p$ or $q$.
Moreover,  $\sigma_p=\{p\}\in\Upsilon_{T}(G)$ if $p$ is an isolated vertex in $G$.
\end{Definition}
\begin{Definition}\label{d1}
Suppose that  $G$ is a finite simple graph having vertex set $[m]$ and edge set $E(G)$. The total simplicial complex (TSC) $\Delta_{T}(G)$ of $G$ on vertex set $[m]\cup E(G)$ is defined by
$$\Delta_{T}(G)=\langle \sigma |\ \sigma \in\Upsilon_{T}(G)\rangle.$$
\end{Definition}

\begin{Remark}
A total index is said to be a line index if the condition (i) of Definition \ref{d0} holds. The line index is said to be a Gallai index if the incident edges $e_{i,j}$ and $e_{j,k}$ of $G$ do not span a triangle in $G$. Therefore, the TSC contains LSC \cite{AM} and GSC \cite{MAL} as subcomplexes.
\end{Remark}

One can assume that the TSC is built on the vertex set $V(G)\cup E(G)=[m]\cup E(G)=[N]$ by relabeling the edges $E(G)$ with \{m+1,\ldots,N\}. 

On the basis of connectivity of a finite simple graph $G$, we develop first the criteria for connectivity of the TSC associated to a finite simple graph $G$.
\begin{Lemma}\label{l1}
Assume that  $G$ is a finite simple graph having vertex set $[m]$ and edge set $E(G)$. Then, the link of the TSC associated to $G$ at $\emptyset$ is connected if and only if $G$ is connected.
\end{Lemma}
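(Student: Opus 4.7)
First I would observe that $link_{\Delta_T(G)}(\emptyset)=\{\tau\in\Delta_T(G)\mid \tau\cap\emptyset=\emptyset\text{ and }\tau\cup\emptyset\in\Delta_T(G)\}=\Delta_T(G)$, so the task reduces to showing $\Delta_T(G)$ is connected (in the facet-chain sense defined in Section~2) if and only if $G$ is connected.

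For the forward direction I would argue contrapositively. Suppose $G$ decomposes into at least two components $G_1,G_2,\ldots,G_s$ with $s\geq 2$. Each of the clauses (i)--(iv) of Definition~\ref{d0}, together with the isolated-vertex clause, involves only elements that are mutually adjacent or incident in $G$, and hence lie in a single component. Consequently every facet of $\Delta_T(G)$ is supported inside $V(G_t)\cup E(G_t)$ for some unique $t$, and facets coming from distinct components are disjoint. No chain $\sigma=\sigma_0,\ldots,\sigma_i=\bar\sigma$ with $\sigma_j\cap\sigma_{j+1}\neq\emptyset$ can straddle two components, so $\Delta_T(G)$ is disconnected.

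For the reverse direction, assume $G$ is connected. If $G$ consists of a single isolated vertex $v$, then $\Delta_T(G)=\langle\{v\}\rangle$ is trivially connected, so assume $G$ has at least one edge. I would then establish two complementary facts. (A) Every facet of $\Delta_T(G)$ either contains a vertex of $G$ or meets one that does: the only vertex-free facets are the type-(ii) facets $\{p,q,r\}$ of three pairwise adjacent edges, and selecting a shared endpoint $v$ of (say) $p$ and $q$, clause (iv) produces the facet $\{p,q,v\}$, which meets $\{p,q,r\}$ in $\{p,q\}$ and contains $v$. (B) Any two vertices $v,w\in V(G)$ are joined by a facet chain: a path $v=u_0,u_1,\ldots,u_k=w$ in $G$ yields the chain $\{u_0,u_1,e_0\},\{u_1,u_2,e_1\},\ldots,\{u_{k-1},u_k,e_{k-1}\}$ of type-(iii) facets with $e_j=\{u_j,u_{j+1}\}$, consecutive facets sharing $u_{j+1}$. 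Given arbitrary facets $\sigma,\bar\sigma$, fact (A) moves each to a vertex-containing facet, and fact (B) then links those via the path-connectivity of $G$.

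The only mildly subtle point is the maneuver in fact (A) needed to escape a type-(ii) facet; everything else is a direct translation between edge paths in $G$ and facet chains in $\Delta_T(G)$ via clauses (iii) and (iv) of Definition~\ref{d0}.
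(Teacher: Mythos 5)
Your proposal is correct, and it takes a genuinely more constructive route than the paper. The paper argues both directions by contradiction: for sufficiency it supposes $\Delta_T(G)$ splits its vertex set into two parts $V_1,V_2$ not joined by any face, picks a facet in each, and then simply asserts that no path in $G$ joins their elements, concluding $G$ is disconnected; for necessity it runs the mirror-image contradiction. The crucial implication in both directions---that a walk in $G$ translates into a succession of facets of $\Delta_T(G)$ with consecutive nonempty intersections---is never actually exhibited there. Your facts (A) and (B) supply exactly this missing translation: the chain of type-(iii) facets $\{u_j,u_{j+1},e_j\}$ along a path realizes path-connectivity of $G$ as facet-connectivity of $\Delta_T(G)$, and the type-(iv) maneuver $\{p,q,v\}$ correctly handles the only vertex-free facets (three pairwise adjacent edges), which the paper's argument does not isolate as a separate case. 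Your contrapositive for the other direction (every clause of Definition~\ref{d0} is supported inside a single component, so facets from distinct components are disjoint and no chain can straddle them) is also cleaner than the paper's double contradiction. In short, you prove the same equivalence, but where the paper asserts the path-to-chain correspondence, you construct it; the only cost is slightly more bookkeeping, including the observation that every component contributes at least one facet so that a disconnected $G$ really does yield a disconnected complex.
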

\proof
One can easily see that $link_{\Delta_T(G)}(\emptyset)=\Delta_T(G)$. Therefore, we need to establish the necessary and sufficient condition for $\Delta_T(G)$ to be connected is $G$ to be connected.

We consider $G$ a finite simple graph containing vertex set $[m]$ and edge set $E(G)$ such that $[m]\cup E(G)=[N]$. For  $N\leq3$, the result is trivial  and can be treated into separate cases. So, from now on we take $N\geq4$. 

First, we prove that $\Delta_{T}(G)$ is connected if $G$ is connected. We prove it by contradiction. We assume that the TSC of $G$ is not connected. Then, as long as no face of $\Delta_T(G)$ includes vertices in both $V_1$ and $V_2$, the vertex set $[N]$ of $\Delta_T(G)$ can be expressed as disjoint union of two non-empty subsets $V_1$ and $V_2$ of $[N]$.  Since $G$ is connected and $N\geq 4$, therefore $\Delta_T(G)$ is a pure simplicial complex having dimension $2$, see Definition \ref{d1}. Consequently, there are at least two facets $\sigma_{p,q,r}, \sigma_{s,t,u}\in\Delta_T(G)$ such that $p, q, r\in V_1\subset [N]$ and $s, t, u\in V_2\subset [N]$. By Definition \ref{d1}, the vertices $p, q, r$ (respectively $s, t, u$) are adjacent vertices of total graph $T(G)$. Thus, $p, q, r$ (respectively $s, t, u$) in $G$ must satisfy one of the following conditions:\\
\noindent (i) $p, q, r$ (respectively $s, t, u$) are adjacent vertices in $G$;
\noindent (ii) $p, q, r$ (respectively $s, t, u$) are adjacent edges in $G$;
\noindent (iii) $p$ and $q$ (respectively $s$ and $t$) are adjacent vertices and $r$ (respectively $u$) is an incident edge to $p$ or $q$ (respectively $s$ or $t$) in $G$;
\noindent (iv) $p$ and $q$ (respectively $s$ and $t$) are adjacent edges and $r$ (respectively $u$) is an incident vertex to $p$ or $q$ (respectively $s$ or $t$) in $G$.

But there is no path in $G$ joining $p$, $q$ or $r$ with $s$, $t$ or $u$ as end points. It implies that $G$ is disconnected, a contradiction. Hence the condition is sufficient.

Now, we prove that if $\Delta_T(G)$ is connected then $G$ is connected. On contrary, we suppose that $G$ is not connected. Therefore, there exist vertices $i, j\in G$ such that there is no path in $G$ joining $i$ and $j$ as end points. Consequently, there are at least two facets, say $\sigma_{p,q,r}$ and $\sigma_{s,t,u}$, in $\Delta_T(G)$ with two non-empty disjoint  subsets $ V_1\ni p, q, r$ and $V_2\ni s, t, u$ of $[N]$ such that there is no succession of facets joining $\sigma_{p,q,r}$ and $\sigma_{s,t,u}$, see Definition \ref{d1}. Thus, $\Delta_{T}(G)$ is disconnected, a contradiction. Therefore, the condition is necessary.
\endproof

We demonstrate that for the TSC of a simple graph $G$, the CM property is not universally true.

\begin{Remark}\label{r1}
Consider a simple graph  $G$. If $G$ is not connected, then the TSC of $G$ is not $CM_t$ with $t=0,1$, see Theorem \ref{tt} and Lemma \ref{l1}.
\end{Remark}

The following example suggests us that the TSC of a connected graph $G$ is not CM in general.

\begin{Example}\label{e1}
Every CM simplicial complex, as far as we are aware, is unmixed \cite{F1}. We intend to find a connected TSC, which is not unmixed, see Remark \ref{r1}. The TSC of a connected graph is connected, see Lemma \ref{l1}. Therefore, we want to find a connected graph for which the associated TSC is not unmixed. Let's consider the graph $C_{4,2}$ having $2$ cycles of length $4$ with two edges common, as shown in Figure ~\ref{fig:f2}. The TSC of $C_{4,2}$ is given by\\
$\Delta_T(C_{4,2})=\langle \sigma_{1,2,3}, \sigma_{2,3,4}, \sigma_{3,4,5}, \sigma_{4,5,6}, \sigma_{5,6,7},
\sigma_{6,7,8}, \sigma_{1,9,10}, \sigma_{9,10,11}, \sigma_{5,10,11}, \sigma_{1,3,4},\\ \sigma_{1,3,8}, \sigma_{1,3,9},
\sigma_{1,7,8},\sigma_{1,2,7}, \sigma_{1,7,9}, \sigma_{1,6,7}, \sigma_{1,2,10}, \sigma_{1,8,10}, \sigma_{1,10,11},
\sigma_{2,3,5}, \sigma_{3,5,6}, \sigma_{4,5,7},\\ \sigma_{5,7,8}, \sigma_{4,5,10}, \sigma_{5,6,10}, \sigma_{5,9,10},
\sigma_{1,2,4}, \sigma_{2,4,5}, \sigma_{1,2,9}, \sigma_{2,3,9}, \sigma_{2,9,10}, \sigma_{1,2,8}, \sigma_{2,3,8},
\sigma_{2,7,8},\\ \sigma_{1,8,9}, \sigma_{8,9,10}, \sigma_{7,8,9}, \sigma_{1,9,11}, \sigma_{5,9,11}, \sigma_{3,4,6},
\sigma_{4,6,7}, \sigma_{3,4,11},\sigma_{4,5,11}, \sigma_{4,10,11}, \sigma_{5,6,11},\\ \sigma_{6,10,11}, \sigma_{6,7,11},
\sigma_{5,6,8}, \sigma_{1,6,8}, \sigma_{1,3,5},\sigma_{1,5,7}, \sigma_{1,5,10}, \sigma_{1,3,10}, \sigma_{1,7,10},
\sigma_{3,5,7}, \sigma_{5,7,10}, \sigma_{3,5,10},\\ \sigma_{2,8,9},\sigma_{2,4,9}, \sigma_{2,9,11}, \sigma_{2,4,6},
\sigma_{2,4,8}, \sigma_{2,6,8}, \sigma_{2,4,11}, \sigma_{4,6,11}, \sigma_{4,6,8},\sigma_{4,9,11}, \sigma_{6,9,11},
\sigma_{6,8,11},\\ \sigma_{6,8,9}, \sigma_{8,9,11}, \sigma_{3,5,11}, \sigma_{5,7,11}\rangle$, see Definition \ref{d1}.

\begin{figure}[ht]
	\centering
	\includegraphics[scale=0.5]{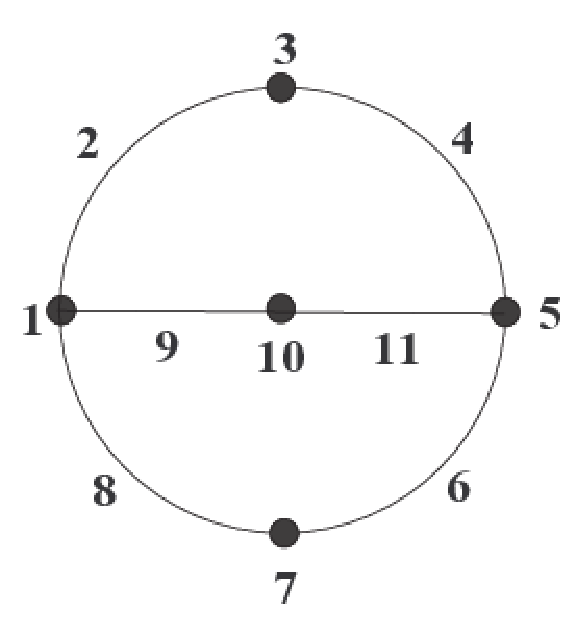}
	\caption{$C_{4,2}$}
	\label{fig:f2}
\end{figure}

One can see that the vertex covers $\{1,4,5,6,8,9\}$ and
$\{1,2,4,5,6,8,10\}$ are the  minimal vertex covers of $\Delta_T(C_{4,2})$ that don't have same cardinalities. Thus, the TSC of $C_{4,2}$ is not unmixed and therefore not Cohen-Macaulay.
\end{Example}

\begin{Lemma}\label{l2}
Let $\Delta_{T}(G)$ be TSC of a finite simple graph $G$. The link of $\Delta_{T}(G)$ at $\{i\}$ is connected for every $i\in\Delta_{T}(G)$.
\end{Lemma}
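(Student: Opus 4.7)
The plan is to reduce connectivity of $\mathrm{link}_{\Delta_T(G)}(\{i\})$ to showing that its $1$-skeleton (as an abstract graph) is connected, and to exhibit this by producing a single vertex in the link that can be joined to every other link-vertex by one or two link-edges — i.e.\ a ``nearly star-shaped'' structure. Recall that a pair $\{p,q\}$ is an edge of $\mathrm{link}(\{i\})$ precisely when $\{i,p,q\}\in\Upsilon_T(G)$, and the vertex set of the link is the neighborhood of $i$ in the total graph $T(G)$. So I would translate the four alternatives of Definition~\ref{d0} into combinatorial criteria for link-edges and then check reachability.

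I would split into two cases depending on whether $i$ is a vertex or an edge of $G$. \textbf{Case 1: $i=v\in[m]$.} If $v$ is isolated, $\{v\}$ is a facet and $\mathrm{link}(\{v\})=\{\emptyset\}$ is trivially connected. Otherwise choose any edge $e$ of $G$ incident to $v$ as my ``hub.'' For any other edge $f$ incident to $v$, the triple $\{v,e,f\}$ qualifies under Definition~\ref{d0}(iv) (adjacent edges $e,f$ with incident vertex $v$), so $\{e,f\}$ is a link-edge. For any vertex $u$ adjacent to $v$ in $G$, let $e_u$ be the edge $vu$; then $\{v,u,e_u\}$ qualifies under (iii), giving the link-edge $\{u,e_u\}$, and $\{v,e,e_u\}$ qualifies under (iv), giving $\{e,e_u\}$. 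Thus every vertex of the link is within distance at most two of the hub $e$.

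\textbf{Case 2: $i=e\in E(G)$, say $e=vw$.} Now the neighbors of $e$ in $T(G)$ are exactly $v,w$ and the edges of $G$ distinct from $e$ but sharing an endpoint with $e$. I take $v$ as the hub. The pair $\{v,w\}$ is a link-edge via condition (iii) applied to $\{e,v,w\}$. For any edge $f\neq e$ adjacent to $e$, the triple $\{e,v,f\}$ fulfills condition (iv), so $\{v,f\}$ is a link-edge. Hence every link-vertex is directly joined to $v$ and the link is even star-shaped at $v$.

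The argument is essentially routine case-checking once the dictionary between Definition~\ref{d0} and the link is set up, so the only real obstacle I anticipate is bookkeeping: correctly verifying in each subcase which clause of Definition~\ref{d0} applies (in particular, not confusing ``adjacent vertices'' with ``adjacent edges'' and keeping track of which element of the triple must be an edge versus a vertex), and dealing cleanly with the degenerate instances (isolated vertex, the case $u=w$ in Case~1 where $e_u=e$ coincides with the hub).
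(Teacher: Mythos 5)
Your overall strategy (reduce to connectivity of the $1$-skeleton of the link and exhibit a hub from which every link-vertex is reachable in a bounded number of steps) is sound and is actually more concrete than the paper's own argument, which proceeds by contradiction from Lemma \ref{l1}. However, there is a genuine gap: your premise that ``the vertex set of the link is the neighborhood of $i$ in the total graph $T(G)$'' is false, and your case analysis only checks reachability for that smaller set. The point is that $\Delta_T(G)$ is \emph{not} the clique complex of $T(G)$: in clauses (iii) and (iv) of Definition \ref{d0} the third element is only required to be incident to \emph{one} of the other two, so a total index need not be a triangle of $T(G)$, and consequently $\mathrm{link}_{\Delta_T(G)}(\{i\})$ contains elements that are not adjacent to $i$ in $T(G)$.

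Concretely, in your Case 1 ($i=v$ a vertex) the link also contains every edge $g=uw'$ of $G$ incident to a neighbor $u$ of $v$ but not to $v$ itself, via the total index $\{v,u,g\}$ from (iii) (or $\{v,vu,g\}$ from (iv)); your reachability check never mentions these. In Case 2 ($i=e=vw$) the link also contains every vertex $u$ adjacent to $v$ or to $w$ with $u\notin\{v,w\}$, via the total index $\{e,v,u\}$ or $\{e,w,u\}$ from (iii); for a neighbor $u'$ of $w$ that is not adjacent to $v$, the pair $\{v,u'\}$ is \emph{not} a link-edge (no clause of Definition \ref{d0} makes $\{e,v,u'\}$ a total index), so your claim that the link is star-shaped at $v$ is false. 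The lemma itself survives because all of these omitted elements are still within distance two of your hub (e.g.\ $g$--$vu$--$e$ in Case 1, and $u'$--$w$--$v$ in Case 2), but as written your proof does not establish connectivity for them; you need to first enumerate the full vertex set of the link from all four clauses of Definition \ref{d0} and then run the reachability check over that list.
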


\proof
If $G$ is a disconnected graph and it has $n$ connected components, say $G_1, \ldots, G_n$. Then the TSC $\Delta_T(G)$ is a disconnected simplicial complex and it consists of $n$ connected components $\Delta_T(G_1),\ldots,$ $\Delta_T(G_n)$, see Lemma \ref{l1} and Definition \ref{d1}. It is enough to prove the result for a connected component of $\Delta_T(G)$. Therefore we can assume that $G$ is a connected graph.    

We establish this result by contradiction. Suppose that $G$ is a connected graph. On contrary, we assume that the link $link_{\Delta_T(G)}(\{i\})$ of $\Delta_{T}(G)$ at $\{i\}$ is not connected for some $i\in\Delta_{T}(G)$. Since $G$ is a connected graph, therefore the TSC associated to $G$ is connected by Lemma \ref{l1}.  Consequently, for any pair of vertices $r$ and $s$ of $\Delta_{T}(G)$ there exists a sequence of facets $\sigma_1,\ldots,\sigma_q$ of $\Delta_{T}(G)$ with $r\in \sigma_1$ and $s\in \sigma_q$ such that $\sigma_q\cap \sigma_{q+1}\neq\emptyset$. It implies that for any pair of vertices or edges, $r$ and $s$, of $G$ there exists a sequence of adjacent/incident vertices or edges of $G$ joining $r$ and $s$ as end points. 

Note that $\Delta_{T}(G)$ is a simplicial complex of dimension $2$, see Definition \ref{d1}. Therefore, one can easily see that $link_{\Delta_T(G)}(\{i\})$ is a $1$-dimensional subcomplex of $\Delta_{T}(G)$. It is a disconnected graph on the vertex set $V(G)\cup E(G)$ for some $i$ by our assumption. So, there exist edges $\{j_1,j_2\}$ and $\{k_1,k_2\}$ of $link_{\Delta_T(G)}(\{i\})$ such that there is no path in $link_{\Delta_T(G)}(\{i\})$ joining $j_h$ and $k_l$ as end points with $h, l=1, 2$. It implies that $j_h$ and $k_l$ are vertices or edges of $G$ such that there is no sequence of adjacent/incident vertices or edges of $G$ joining $j_h$ and $k_l$ as end points, a contradiction. Hence the result.
\endproof

\begin{Theorem}\label{t2}
We consider $G$ a connected graph. The TSC associated to $G$ is Buchsbaum.
\end{Theorem}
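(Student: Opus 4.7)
The plan is to invoke Theorem \ref{tt} with $t = 1$, since the Buchsbaum property is precisely $CM_1$. Writing $d - 1$ for the dimension of $\Delta_T(G)$, that criterion asks for two things: that $\Delta_T(G)$ be pure, and that $\widetilde{H}_r(link_{\Delta_T(G)}(\sigma); K) = 0$ for every face $\sigma$ with $\#\sigma \geq 1$ and every $r < d - \#\sigma - 1$. I will show that in the nontrivial range one has $d = 3$, so the homology vanishing is substantive only when $\#\sigma = 1$ and $r = 0$; for $\#\sigma = 2,3$ the inequality $r < d-\#\sigma-1$ is empty and there is nothing to check.

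For purity I would first dispose of the small case $|V(G)| \leq 1$ by hand: there $\Delta_T(G)$ is zero-dimensional and Buchsbaum vacuously. Otherwise $G$ is connected with at least two vertices, so $G$ has no isolated vertex, and the exceptional singleton clause of Definition \ref{d0} contributes nothing to $\Upsilon_T(G)$; clauses (i)--(iv) each produce three-element sets. Hence every generator of $\Delta_T(G)$ is a $2$-simplex and every facet has dimension $2$, so $\Delta_T(G)$ is pure of dimension $d - 1 = 2$.

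For the homology condition, as noted above, I only have to treat $\sigma = \{i\}$ with $i \in \Delta_T(G)$, and the requirement $\widetilde{H}_0(link_{\Delta_T(G)}(\{i\}); K) = 0$ is the statement that the link is connected. This is exactly the content of Lemma \ref{l2}, already established. Feeding purity together with this vanishing into Theorem \ref{tt} with $t = 1$ yields the conclusion that $\Delta_T(G)$ is Buchsbaum.

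The main obstacle has been absorbed into Lemma \ref{l2}: once vertex links are known to be connected, the theorem is essentially formal, because the Reisner-type criterion for Buchsbaumness in dimension $2$ reduces to purity plus $\widetilde{H}_0$-vanishing of each vertex link. No higher-dimensional link homology has to be computed here, which is what makes Buchsbaumness (as opposed to full Cohen-Macaulayness, handled in Theorem \ref{t3}) accessible by the methods developed so far.
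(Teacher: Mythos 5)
Your proposal is correct and follows essentially the same route as the paper: apply Theorem \ref{tt} with $t=1$, observe that for the two-dimensional pure complex $\Delta_T(G)$ the only substantive condition is $\widetilde{H}_0$-vanishing of vertex links, and invoke Lemma \ref{l2}. Your explicit verification of purity and the small-case check are slightly more careful than the paper's parenthetical assertion, but the argument is the same.
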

\proof
By Theorem \ref{tt}, for being $CM_1$ it is necessary and sufficient that\\ $\widetilde{H}_k\,(link_{\Delta_T(G)}(\sigma); K)=0$ for all $\sigma\in\Delta_T(G)$ for which $\#\sigma\geq 1$ and $k<d-\#\sigma-1=3-\#\sigma-1$ (since for each facet $\sigma\in\Delta_T(G)$, $\#\sigma=3$). If $\#\sigma=2$, then $k<3-\#\sigma-1=3-2-1=0$, so the homology groups $\widetilde{H}_k\,(link_{\Delta_T(G)}(\sigma); K)$ are already zero. Therefore, it is enough to consider the link of single element faces. In this case, it is enough to prove that the link $link_{\Delta_T(G)}(\{i\})$ of $\Delta_{T}(G)$ at $\{i\}$ is connected for every $i\in V(G)\cup E(G)$, which has been established in Lemma \ref{l2}. Hence proved.   
\endproof

\begin{Theorem}\label{t3}
Consider $G$ a connected graph. The necessary and sufficient condition for total simplicial complex $\Delta_{T}(G)$ to be CM is vanishing of its first homology group.
\end{Theorem}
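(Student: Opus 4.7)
The strategy is to combine Reisner's topological criterion (Theorem \ref{t}) with the Buchsbaum property already proved in Theorem \ref{t2}, reducing the CM condition to a single homology vanishing statement.

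First, I would observe that for a connected graph $G$, the TSC $\Delta_T(G)$ is a pure simplicial complex of dimension $d-1=2$ (every facet has three vertices by Definition \ref{d1}). By Reisner's theorem, $\Delta_T(G)$ is CM over $K$ if and only if $\widetilde{H}_r(link_{\Delta_T(G)}(\sigma);K)=0$ for every $\sigma\in\Delta_T(G)$ and every $r<\dim(link_{\Delta_T(G)}(\sigma))$. The idea is to split the verification into two cases: $\sigma=\emptyset$ and $\sigma\neq\emptyset$.

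Next, for $\sigma\neq\emptyset$, I would invoke Theorem \ref{t2}: since $\Delta_T(G)$ is Buchsbaum (equivalently $CM_1$), the characterization in Theorem \ref{tt} gives $\widetilde{H}_r(link_{\Delta_T(G)}(\sigma);K)=0$ for all $\sigma$ with $\#\sigma\geq 1$ and $r<d-\#\sigma-1=2-\#\sigma$. This precisely covers the vanishing required by Reisner's condition for all nonempty faces, since the link of $\sigma$ has dimension at most $d-\#\sigma-1$. Thus Buchsbaumness already secures every Reisner condition except the one at $\sigma=\emptyset$.

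It remains to treat $\sigma=\emptyset$, where $link_{\Delta_T(G)}(\emptyset)=\Delta_T(G)$ itself and the required vanishing is $\widetilde{H}_r(\Delta_T(G);K)=0$ for $r<2$, i.e.\ for $r=0,1$. Since $G$ is connected, Lemma \ref{l1} gives that $\Delta_T(G)$ is connected, and hence $\widetilde{H}_0(\Delta_T(G);K)=0$ automatically. Consequently, the whole Reisner criterion collapses to the single condition $\widetilde{H}_1(\Delta_T(G);K)=0$; using $H_r=\widetilde{H}_r$ for $r>0$, this is exactly the vanishing of the first homology group. I would conclude that $\Delta_T(G)$ is CM over $K$ iff $H_1(\Delta_T(G);K)=0$.

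There is no real obstacle here; the proof is essentially a bookkeeping exercise assembling Theorems \ref{t}, \ref{tt} and \ref{t2} together with Lemma \ref{l1}. The only point requiring a moment of care is confirming that the dimension of every link matches the bound appearing in Theorem \ref{tt}, which follows from the purity of $\Delta_T(G)$ (facets have exactly three vertices by Definition \ref{d1}), so all indexing matches cleanly.
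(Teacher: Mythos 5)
Your proposal is correct and follows essentially the same route as the paper: both reduce the CM condition to the vanishing of reduced homology of links stratified by $\#\sigma$, handle the nonempty faces via connectedness of vertex links, and settle $\sigma=\emptyset$ using Lemma \ref{l1} together with the hypothesis $\widetilde{H}_1(\Delta_T(G);K)=0$. The only cosmetic difference is that you dispose of the faces with $\#\sigma\geq 1$ by citing Theorem \ref{t2} (Buchsbaumness), whereas the paper re-runs that case analysis directly through Theorem \ref{tt} and Lemma \ref{l2}; the underlying content is identical.
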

\proof
From Theorem \ref{tt}, for being $CM_0$ it is necessary and sufficient that
$\widetilde{H}_k\,(link_{\Delta_T(G)}(\sigma); K)=0$ for all $\sigma\in\Delta_T(G)$ for which $\#\sigma\geq 0$ and $k<d-\#\sigma-1=3-\#\sigma-1$. 

If $\#\sigma=2$, then $k<3-\#\sigma-1=3-2-1=0$, so the homology groups $\widetilde{H}_k\,(link_{\Delta_T(G)}(\sigma); K)$ are already zero. 

If $\#\sigma=1$, then $k<3-\#\sigma-1=3-1-1=1$. In this case, it is enough to prove that the link $link_{\Delta_T(G)}(\{i\})$ is connected for every $i\in V(G)\cup E(G)$, which has been proved in Lemma \ref{l2}.

If $\#\sigma=0$, then $k<3-\#\sigma-1=2$. In this case, we want to prove that $\widetilde{H}_k\,(link_{\Delta_T(G)}(\emptyset); K)=0$ with $k=0, 1$. Since $G$ is a connected graph, therefore the link $link_{\Delta_T(G)}(\emptyset)$ is connected by Lemma \ref{l1}. Thus, $\widetilde{H}_0\,(link_{\Delta_T(G)}(\emptyset); K)=0$. By hypothesis, $\widetilde{H}_1\,(link_{\Delta_T(G)}(\emptyset); K)=0$. Hence the result.
\endproof

\section{CM Property of $\Delta_T(F_{5n+1})$ and Associated Primes of the Facet Ideal $I_{\mathcal{F}}(\Delta_T(F_{5n+1}))$}

We fix the labeling of vertices and edges of a family of friendship graphs $F_{5n+1}$ with  $2n+1$ vertices and $3n$ edges as shown in Figure ~\ref{fig:f1}.
\begin{figure}[ht]
\centering
\includegraphics[scale=1.4]{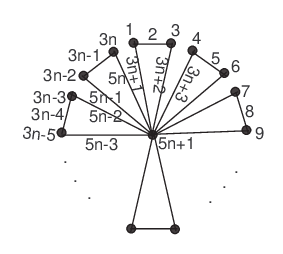}
\caption{$F_{5n+1}$}
\label{fig:f1}
\end{figure}

The following result provides us a combinatorial build-up of TSC associated to a family of friendship graphs $F_{5n+1}$.

\begin{Lemma}\label{l3}
Let $F_{5n+1}$ be a family of friendship graphs having $2n+1$ vertices and $3n$ edges. The TSC $\Delta_T(F_{5n+1})$ of $F_{5n+1}$ is generated by the following facets:
\begin{enumerate}
 \item $\sigma_{i,i+1,i+2}$ with $i=1,4,\ldots,3n-2;$

\item $\sigma_{i,j,5n+1}$  with $i = 1,4,\ldots,3n-2$, $j =3n+1,\ldots,5n$ and $i =3,6,\ldots,3n$, $j=3n+1,\ldots,5n;$

\item $\sigma_{3n+i,3n+j,3n+k}$ with $i= 1,\ldots,2n-2$,\\
$j= i+1,\ldots,2n-1$, $k= j+1,\ldots,2n;$

\item $\sigma_{3n+i,3n+j,5n+1}$ with $i = 1,\ldots,2n-1$,
$j = i+1,\ldots,2n;$

\item $\sigma_{i,i+1,3n+k}$ and $\sigma_{i,i+1,3n+k+1}$ for corresponding values of $i$ and $k$ in the sequences
$i = 1,4,\ldots,3n-2$, $k =1,3,\ldots,2n-1$ and $i=2,5,\ldots,3n-1$, $k=1,3,\ldots,2n-1;$

\item $\sigma_{i,i+1,5n+1}$ with $i = 1,4,\ldots,3n-2$,
$i=2,5,\ldots,3n-1;$

\item $\sigma_{i,j,5n+1},\ \sigma_{i,j-1,5n+1};\ \sigma_{i,j,5n+1},\ \sigma_{i,j+2,5n+1}; \sigma_{i,3n,5n+1}$ with $i = 1,4,\ldots,3n-5,\ j=i+3,i+6,\ldots,3n-2;\ i=3,6,\ldots,3n-3$,
$j=i+1,i+4,\ldots,3n-2;\ i=1,4,\ldots,3n-2$ (respectively)$;$

\item $\sigma_{i,3n+j,5n+1}$ and $\sigma_{i,3n+j+1,5n+1}$ for corresponding values of $i=2,5,\ldots,3n-1$ and
$j=1,3,\ldots,2n-1;$

\item $\sigma_{i,3n+j,3n+k}$ for corresponding values of $i$ and $j$ in $i=2,5,\ldots,3n-1,\ j=1,3,\ldots,2n-1$,\\
$k=j+1,\ldots,2n$ and $i=2,5,\ldots,3n-4$,\\
$j=2,4,\ldots,2n-2,\ k=j+1,\ldots,2n;$

\item $\sigma_{3n-i,5n-k,5n-j}$ for corresponding values of $i$ and $j$ in $i=1,4,\ldots,3n-5,\ j=0,2,\ldots,2n-4,\ k=j+2,\ldots,2n-1$ and $i=1,4,\ldots,3n-5,\ j=1,3,\ldots,2n-3,\ k=j+1,\ldots,2n-1;$

\item $\sigma_{i,i+2,3n+j}$ and $\sigma_{i,i+2,3n+j+1}$ for corresponding values of $i=1,4,\ldots,3n-2$ and
$j=1,3,\ldots,2n-1;$

\item $\sigma_{i,3n+j,3n+k}$ for corresponding values of $i$ and $j$ in $i=1,4,\ldots,3n-2,\ j=1,3,\ldots,2n-1$,
$k=j+1,\ldots,2n$ and $i=3,6,\ldots,3n-3$,
$j=2,4,\ldots,2n-2,\ k=j+1,\ldots,2n;$

\item $\sigma_{3n-i,5n-k,5n-j}$ for corresponding values of $i$ and $j$ in $i=2,5,\ldots,3n-4,\ j=1,3,\ldots,2n-3,\ k=j+1,\ldots,2n-1$ and $i=0,3,\ldots,3n-3,\ j=0,2,\ldots,2n-2,\ k=j+1,\ldots,2n-1.$
\end{enumerate}
\end{Lemma}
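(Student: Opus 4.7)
The plan is to verify the lemma by applying Definition \ref{d0} to $F_{5n+1}$ under the labeling of Figure \ref{fig:f1} and matching the resulting total indices with the thirteen items. First I would fix the labeling conventions explicitly: the three elements of the $k$-th triangle receive consecutive labels $3k-2,\,3k-1,\,3k$, the block $\{3n+1,\ldots,5n\}$ is occupied by the $2n$ edges meeting the central vertex (two per triangle), and the central vertex itself carries the label $5n+1$. Since every total index has exactly three elements and $\Delta_{T}(F_{5n+1})$ is, by Definition \ref{d1}, generated by $\Upsilon_{T}(F_{5n+1})$, each total index automatically qualifies as a facet, so it suffices to prove that items 1--13 list exactly the members of $\Upsilon_{T}(F_{5n+1})$.

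I would then walk through the four clauses of Definition \ref{d0} in turn and record the triples produced. Clause (i) yields only the $n$ facial triangles of $F_{5n+1}$ (absorbed into item 6). Clause (ii) yields the $\binom{2n}{3}$ triples of edges meeting at the center (item 3) together with the $n$ edge-triples of individual triangles (item 1). The mixed clauses (iii) and (iv) contribute the bulk of the facets; I would partition these according to the relationship of the triple to $5n+1$ and to the triangle partition of $F_{5n+1}$. Triples confined to a single triangle populate items 1, 5 and 6; triples containing $5n+1$ and spanning two triangles populate items 2, 4, 7 and 8; and the remaining cross-triangle triples, which realise their mutual incidence only through the central vertex without containing it, populate items 9--13. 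For each item I would check on a representative choice of parameters that Definition \ref{d0} is satisfied and then invoke the $k$-indexed symmetry among the $n$ triangles to extend the verification to the full parameter range.

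For the converse direction, I would observe that in $F_{5n+1}$ every adjacency or incidence not confined to a single triangle is routed through the central vertex. Consequently any triple satisfying one of clauses (i)--(iv) involves either elements of a single triangle or two triangles connected through $5n+1$, and a direct inspection of each possibility matches it with one of the structural cases above.

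The main obstacle is the bookkeeping forced by the \emph{corresponding values} phrasing of items 5 and 7--13, where $i$ and $j$ (or $i$, $j$, $k$) must be read as paired sequences rather than independent ranges. Translating these specifications into explicit triples and confirming that they cover all, and only, the cross-triangle facets without overlap is the delicate part. A convenient sanity check before attempting the bijective matching is to compute the cardinality of $\Upsilon_{T}(F_{5n+1})$ as a polynomial in $n$ directly from Definition \ref{d0} and compare it term-by-term with the sum of the cardinalities implied by items 1--13; agreement of these polynomials reduces the remaining task to showing that no listed triple appears in two distinct items.
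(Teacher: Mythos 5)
Your plan is essentially the paper's approach: the paper's entire proof of this lemma is a single sentence asserting that the facets in lists (1)--(13) are exactly those produced by Definition \ref{d1}, i.e., a direct verification from the definition left to the reader. Your outline --- fixing the labeling from Figure \ref{fig:f1}, running through the four clauses of Definition \ref{d0}, partitioning the triples by their relation to the central vertex and the triangle decomposition, and cross-checking cardinalities --- is that same verification, carried out in more detail than the paper itself supplies.
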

\proof
One can see from Definition \ref{d1} that the TSC of a family of friendship graphs $F_{5n+1}$ is generated by all the facets in lists (1) to (13), given above.
\endproof

Now, we give formulation for $f$-vector of TSC associated to $F_{5n+1}$.

\begin{Lemma}\label{l4}
Let $\Delta_{T}(F_{5n+1})$ be TSC associated to a family of friendship graphs $F_{5n+1}$. Then, the $f$-vector of $\Delta_{T}(F_{5n+1})$ is given by
$$(\alpha_0,\alpha_1,\alpha_2)=(5n+1, 10n^{2}+5n, \frac{4n^3+42n^2+14n}{3}).$$
\end{Lemma}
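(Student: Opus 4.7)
\proof
The plan is to compute each of $\alpha_0$, $\alpha_1$, $\alpha_2$ by direct enumeration. The count $\alpha_0=5n+1$ is immediate, since the vertex set of $\Delta_T(F_{5n+1})$ is $V(F_{5n+1})\cup E(F_{5n+1})$, of cardinality $(2n+1)+3n=5n+1$.

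For $\alpha_2$, we would invoke the explicit facet enumeration of Lemma \ref{l3}. Each of the thirteen lists there is parameterized by index ranges of arithmetic-progression type (for instance $i=1,4,\ldots,3n-2$, contributing $n$ values) or by binomial-type constraints (for instance $i<j<k$ in $\{1,\ldots,2n\}$, contributing $\binom{2n}{3}$ triples). The plan is to express the size of each list as a polynomial in $n$, sum the thirteen contributions and simplify. Lists 5, 8 and 11 produce two facets per admissible index tuple, while lists 7, 9, 10, 12 and 13 split into several subcases whose cardinalities must be aggregated separately and then summed. After collecting all terms the total should reduce to $(4n^3+42n^2+14n)/3$.

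For $\alpha_1$ the plan is to count the pairs $\{p,q\}\subseteq[5n+1]$ that lie in some facet of $\Delta_T(F_{5n+1})$. We would classify such pairs by type (vertex-vertex, vertex-edge, edge-edge) and by whether their two components belong to the same wing of $F_{5n+1}$ or to different wings; the central vertex (label $5n+1$) plays a distinguished role and is treated separately. Using Definition \ref{d0} one decides in each case whether the pair is contained in a facet, and the contributions should sum to $10n^2+5n$. The hard part will be the bookkeeping for $\alpha_2$: the thirteen lists of Lemma \ref{l3} have subtly overlapping index schemes (for example the families $\sigma_{i,3n+j,3n+k}$ appear under different parity constraints in lists 9 and 12, and their ``mirror'' form $\sigma_{3n-i,5n-k,5n-j}$ appears in lists 10 and 13), so one must carefully confirm disjointness, or else quantify the overlap, to ensure that the polynomial sum correctly captures the distinct facet count; once this verification is in place, the algebraic simplification to the stated polynomial is routine.
\endproof
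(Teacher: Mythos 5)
Your approach is the same as the paper's --- direct enumeration of the facets from Lemma \ref{l3} for $\alpha_2$ and a separate case-by-case count of the $1$-faces for $\alpha_1$ --- but the proposal stops at the level of a plan and never executes the computation that constitutes the actual content of the lemma. Statements such as ``the total should reduce to $(4n^3+42n^2+14n)/3$'' and ``the contributions should sum to $10n^2+5n$'' are exactly the claims to be proved; without writing down the cardinality of each of the thirteen families as an explicit polynomial in $n$ (e.g.\ list 2 contributes $4n^2$, list 3 contributes $\sum$-type sums that collapse to $\tfrac{2n(n-1)(2n-1)}{3}$, lists 4, 7, 9, 12, 13 each contribute $n(2n-1)$, etc.) and summing them, there is no verification that the stated formulas are correct. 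The paper's proof does precisely this arithmetic for all thirteen lists and for four families of edges, and that arithmetic is the proof.

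You also correctly identify a genuine delicacy --- that the index schemes in lists 9 and 12 (and their ``mirror'' forms in lists 10 and 13) look similar enough that one must confirm the thirteen families are pairwise disjoint before adding their sizes --- but you then leave it as an unresolved ``must carefully confirm.'' Raising a necessary verification without performing it does not discharge it; as written, the proposal neither establishes disjointness nor quantifies any overlap, so even the skeleton of the $\alpha_2$ count is incomplete. To turn this into a proof you would need to (i) fix the labeling convention of $F_{5n+1}$ from Figure \ref{fig:f1} so that each list of Lemma \ref{l3} is seen to describe a distinct set of facets, and (ii) actually evaluate and sum the thirteen cardinalities and the edge counts.
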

\proof
Since $F_{5n+1}$ contains $2n+1$ vertices and $3n$ edges, therefore $\alpha_0=5n+1$. We compute  $\alpha_2$ into the following steps.
\begin{enumerate}
\item $|\{\{i,i+1,i+2\}:\ i=1,4,\ldots,3n-2\}|=n$;

\item $|\{\{i,j,5n+1\}:\ i = 1,4,\ldots,3n-2,\ 
j =3n+1,\ldots,5n\ \mbox{and}\ i =3,6,\ldots,3n,\ 
j=3n+1,\ldots,5n\}|=2n+\ldots+2n+2n+\ldots+2n
=2n^2+2n^2=4n^2$;

\item $|\{\{3n+i,3n+j,3n+k\}:\ i= 1,\ldots,2n-2,\ j= i+1,\ldots,2n-1,\ k= j+1,\ldots,2n\}|
=\frac{(2n-2)(2n-1)}{2}+\frac{(2n-3)(2n-2)}{2}+\frac{(2n-4)(2n-3)}{2}+\frac{(2n-5)(2n-4)}{2}+\ldots+
\frac{4.5}{2}+\frac{3.4}{2}+\frac{2.3}{2}+1
=1.1+1.3+2.3+2.5+3.5+3.7+\ldots+(n-1)(2n-3)+(n-1)(2n-1)=1.1+2.3+3.5+\ldots+(n-1)(2n-3)+
1.3+2.5+3.7+\ldots+(n-1)(2n-1)
=\sum_{k=1}^{n-1}k(2k-1)+\sum_{k=1}^{n-1}k(2k+1)
=4\sum_{k=1}^{n-1}k^2=\frac{2n(n-1)(2n-1)}{3}$;

\item $|\{\{3n+i,3n+j,5n+1\}:\ i = 1,\ldots,2n-1\ \mbox{and}\ 
j = i+1,\ldots,2n\}|=2n-1+\ldots+1
=n(2n-1)$;

\item $|\{\{i,i+1,3n+k\}:\ \mbox{for corresponding values of}\ i\ \mbox{and}\ k\ \mbox{in the sequences}\ i = 1,4,\ldots,3n-2,\ 
k =1,3,\ldots,2n-1\ \mbox{and}\ i=2,5,\ldots,3n-1,\ 
k=1,3,\ldots,2n-1\}|+|\{\{i,i+1,3n+k+1\}:\ \mbox{for corresponding values of}\ i\ \mbox{and}\ k\ \mbox{in the sequences}\ i = 1,4,\ldots,3n-2,\ 
k =1,3,\ldots,2n-1\ \mbox{and}\ i=2,5,\ldots,3n-1,\ 
k=1,3,\ldots,2n-1\}|
=2n+2n=4n$;

\item $|\{\{i,i+1,5n+1\}:\ i = 1,4,\ldots,3n-2; i=2,5,\ldots,3n-1\}|=n+n=2n$;

\item $(|\{\{i,j,5n+1\}:\ i = 1,4,\ldots,3n-5,\ j=i+3,i+6,\ldots,3n-2\}|+|\{\{i,j-1,5n+1\}:\ i = 1,4,\ldots,3n-5,\ j=i+3,i+6,\ldots,3n-2\}|)+\\(|\{\{i,j,5n+1\}:\ i=3,6,\ldots,3n-3,\  j=i+1,i+4,\ldots,3n-2\}|+|\{\{i,j+2,5n+1\}:\ i=3,6,\ldots,3n-3,\ j=i+1,i+4,\ldots,3n-2\}|)+\\|\{\{i,3n,5n+1\}:\ i=1,4,\ldots,3n-2\}|=(n-1+\ldots+1+n-1+\ldots+1)+
(n-1+\ldots+1+n-1+\ldots+1)+n
=(n-1)n+(n-1)n+n=n(2n-1)$;

\item $|\{\{i,3n+j,5n+1\}:\ \mbox{for corresponding values of}\ 
i=2,5,\ldots,3n-1\ \mbox{and}\ j=1,3,\ldots,2n-1\}|+|\{\{i,3n+j+1,5n+1\}:\ \mbox{for corresponding values of}\ 
i=2,5,\ldots,3n-1\ \mbox{and}\ j=1,3,\ldots,2n-1\}|
=n+n=2n$;

\item $|\{\{i,3n+j,3n+k\}:\ \mbox{for corresponding values of}\ i\ \mbox{and}\ j\ \mbox{in}\ i=2,5,\ldots,3n-1,\ j=1,3,\ldots,2n-1,\ k=j+1,\ldots,2n\ \mbox{and}\ i=2,5,\ldots,3n-4,\  j=2,4,\ldots,2n-2,\ k=j+1,\ldots,2n\}|=2n-1+2n-3+\ldots+1+2n-2+2n-4+\ldots+2=n(2n-1)$;

\item $|\{\{3n-i,5n-k,5n-j\}:\ \mbox{for corresponding values of}\ i\ \mbox{and}\ j\ \mbox{in}\ i=1,4,\ldots,3n-5,\ j=0,2,\ldots,2n-4,\ 
k=j+2,\ldots,2n-1\ \mbox{and}\ i=1,4,\ldots,3n-5,\ j=1,3,\ldots,2n-3,\ k=j+1,\ldots,2n-1\}|=2n-2+2n-4+\ldots+2+2n-2+2n-4+\ldots+2=2(1+\ldots+n-1+1+\ldots+n-1)=n(n-1)+n(n-1)
=2n(n-1)$;

\item $|\{\{i,i+2,3n+j\}:\ \mbox{for corresponding values of}\ 
i=1,4,\ldots,3n-2\ \mbox{and}\ j=1,3,\ldots,2n-1\}|+|\{\{i,i+2,3n+j+1\}:\ \mbox{for corresponding values of}\ 
i=1,4,\ldots,3n-2\ \mbox{and}\ j=1,3,\ldots,2n-1\}|
=n+n=2n$;

\item $|\{\{i,3n+j,3n+k\}:\ \mbox{for corresponding values of}\ i\ \mbox{and}\ j\ \mbox{in}\ i=1,4,\ldots,3n-2,\ j=1,3,\ldots,2n-1,\ 
k=j+1,\ldots,2n\ \mbox{and}\ i=3,6,\ldots,3n-3,\ j=2,4,\ldots,2n-2,\ k=j+1,\ldots,2n\}|=2n-1+2n-3+\ldots+1+2n-2+2n-4+\ldots+2=1+\ldots+2n-1
=n(2n-1)$;

\item $|\{\{3n-i,5n-k,5n-j\}:\ \mbox{for
corresponding values of}\ i\ \mbox{and}\ j\ \mbox{in}\ i=2,5,\ldots,3n-4,\ j=1,3,\ldots, 2n-3,\ k=j+1,\ldots,2n-1\ \mbox{and}\ i=0,3,\ldots,3n-3,\ j=0,2,\ldots,2n-2,
\ k=j+1,\ldots,2n-1\}|=2n-2+2n-4+\ldots+2+2n-1+2n-3+\ldots+1=1+\ldots+2n-1=n(2n-1)$.
\end{enumerate}
Thus, $\alpha_2=\frac{4n^3+42n^2+14n}{3}$. Now, we calculate $\alpha_1$ into the following steps.
\begin{enumerate}
	\item $|\{\{i,j\}:\ i=1,\ldots,3n,\ j=3n+1,\ldots,5n+1\}|=3n(2n+1)$;
	
	\item $|\{\{i,j\}:\ i=3n+1,\ldots,5n,\ j=i+1,\ldots,5n+1\}|=2n+\ldots+1=2n^2+n$;
	
	\item $|\{\{i,i+1\},\ \{i+1,i+2\},\ \{i,i+2\}:\ i=1,4,\ldots,3n-2\}|=n+n+n=3n$;
	
	\item $|\{\{i,j\}:\ i=1,4,\ldots,3n-5,\ j=i+3,i+6,\ldots,3n-2;\ i=1,4,\ldots,3n-5,\ j=i+5,i+8,\ldots,3n;\ i=3,6,\ldots,3n-3,\ j=i+3,i+6,\ldots,3n;\ i=3,6,\ldots,3n-3,\  j=i+1,i+4,\ldots,3n-2\}|=n-1+\ldots+1+n-1+\ldots+1+n-1+\ldots+1+n-1+\ldots+1=4(1+\ldots+n-1)=2n^{2}-2n$.
\end{enumerate}
Therefore,
$\alpha_{1}=10n^{2}+5n$. Hence proved.
\endproof

We give now formulation of homology groups for the TSC of $F_{5n+1}$.

\begin{Lemma}\label{t5}
Let $\Delta_{T}(F_{5n+1})$
be the TSC of a family of friendship graphs $F_{5n+1}$. Then, the homology groups of
 $\Delta_{T}(F_{5n+1})$ can be expressed as
 \begin{equation*}
H_r(\Delta_{T}(F_{5n+1}))=\left\{
\begin{array}{ll}
K, & \hbox{if $r=0$;} \\
0, & \hbox{if $r=1$;} \\
K^{\beta_2}, & \hbox{if $r=2$;} \\
0, & \hbox{if $r\geq 3$,}
\end{array}
\right.
\end{equation*}
 where $\beta_{2}=\frac{4n^{3}+12n^{2}+14n}{3}$
 for $ n\geq 1$.
\end{Lemma}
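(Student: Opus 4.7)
The plan proceeds in three stages.

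\emph{Stage 1 (easy dimensions).} Every facet of $\Delta_T(F_{5n+1})$ listed in Lemma \ref{l3} has cardinality three, so $\dim\Delta_T(F_{5n+1})=2$ and $C_r=0$ for $r\geq 3$, which forces $H_r=0$ for $r\geq 3$. Since $F_{5n+1}$ is connected, Lemma \ref{l1} gives that $\Delta_T(F_{5n+1})$ is connected, so $H_0(\Delta_T(F_{5n+1}))=K$.

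\emph{Stage 2 (vanishing of $H_1$).} The substantive task is to show $\ker\partial_1=\operatorname{im}\partial_2$. My plan is to fix a spanning tree $T$ of the $1$-skeleton and, for each of the $\alpha_1-(\alpha_0-1)=10n^2$ edges outside $T$, produce a $2$-chain whose boundary is the associated fundamental cycle. The large families of facets of the form $\{i,j,5n+1\}$ appearing in items (2), (4), (6), (7), (8) of Lemma \ref{l3} make $5n+1$ a ``quasi-cone'' vertex: for most $1$-cells $\{i,j\}$ the triangle $\{i,j,5n+1\}$ is itself a $2$-face, giving an immediate boundary relation whose support includes $\{i,j\}$ together with two spokes through $5n+1$. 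Edges internal to a single friendship triangle are handled by the facets $\sigma_{i,i+1,i+2}$ of item (1), and the remaining exceptional edges can be reached by composing such relations through the full $2$-skeleton on $\{3n+1,\ldots,5n,5n+1\}$ supplied by items (3)--(4).

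\emph{Stage 3 (rank of $H_2$).} Since there are no $3$-chains, $H_2=\ker\partial_2$ is a $K$-subspace of the free module $C_2$ and is therefore free. Its rank is then forced by the Euler characteristic identity $\alpha_0-\alpha_1+\alpha_2=\beta_0-\beta_1+\beta_2$, which, together with Lemma \ref{l4} and the results $\beta_0=1$, $\beta_1=0$ from Stages 1--2, gives
\begin{equation*}
\beta_2=(5n+1)-(10n^2+5n)+\tfrac{4n^3+42n^2+14n}{3}-1=\tfrac{4n^3+12n^2+14n}{3},
\end{equation*}
so $H_2\cong K^{\beta_2}$ as claimed.

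The main obstacle is Stage 2: the bookkeeping needed to verify that each fundamental $1$-cycle bounds, across the many distinct ``incidence types'' of edges in the $1$-skeleton of $\Delta_T(F_{5n+1})$. A possibly cleaner alternative would be to construct a discrete Morse function, or equivalently a collapse sequence, on $\Delta_T(F_{5n+1})$ anchored at the vertex $5n+1$, showing that the complex has the homotopy type of a wedge of $\beta_2$ two-spheres; this would deliver Stages 2 and 3 simultaneously and, via Theorem \ref{t3}, would also confirm the Cohen-Macaulayness of $\Delta_T(F_{5n+1})$ claimed in the abstract.
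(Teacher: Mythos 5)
Your Stages 1 and 3 are correct and cleanly argued: purity and dimension give $H_r=0$ for $r\geq 3$, Lemma \ref{l1} gives $H_0=K$, and once $\beta_1=0$ is known the Euler characteristic identity together with Lemma \ref{l4} does force $\beta_2=\frac{4n^3+12n^2+14n}{3}$. The genuine gap is Stage 2, which you yourself flag: the vanishing of $H_1$ is a plan, not a proof. You correctly count $\alpha_1-(\alpha_0-1)=10n^2$ fundamental cycles to kill, and the ``quasi-cone at $5n+1$'' observation is sound for many edge types (items (2), (4), (6), (7) of Lemma \ref{l3} do place the triangle $\{i,j,5n+1\}$ over those edges), but the complex cannot be an actual cone --- otherwise $H_2$ would vanish --- so there is a nonempty list of exceptional edges (for instance edges $\{i,3n+j\}$ with $i\equiv 2\pmod 3$ and $j$ not one of the two spokes matched to the $i$-th triangle block, and the edges $\{i,i+1\}$, $\{i,i+2\}$ internal to a block) for each of which you must exhibit an explicit $2$-chain whose boundary is the corresponding fundamental cycle. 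Until that case analysis is actually carried out, $\beta_1=0$ --- and with it the value of $\beta_2$, which you deduce from it --- remains unestablished.

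For comparison, the paper does not argue homologically at all: it writes down the boundary matrices $\partial_1$ and $\partial_2$ from Lemmas \ref{l3} and \ref{l4}, gives pseudocode generating their columns, asserts $\operatorname{rank}\partial_1=5n$ and $\operatorname{rank}\partial_2=10n^2$ from a MATLAB computation, and then applies rank--nullity. Your route, if completed, would be more self-contained and would not depend on machine computation for a statement quantified over all $n\geq 1$; the discrete Morse/collapsing alternative you mention would be cleaner still, since it would also yield the wedge-of-spheres statement in the paper's subsequent Remark. But as written, the decisive fact $\operatorname{rank}\partial_2=10n^2$, equivalently $H_1=0$, is proved neither by a computation you perform nor by a combinatorial argument you complete.
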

\proof
We know that the TSC of a simple graph $G$ is a pure complex having dimension $2$. Therefore, the chain complex of TSC associated to $F_{5n+1}$ is given by
$$\ldots ^{\underrightarrow{\partial_4}}0 ^{\underrightarrow{\partial_3}} C_2(\Delta_{T}(F_{5n+1}))^{\underrightarrow{\partial_{2}}}  C_{1}(\Delta_{T}(F_{5n+1}))^{\underrightarrow{\partial_{1}}} C_{0}(\Delta_{T}(F_{5n+1}))^{\underrightarrow{\partial_{0}}} 0,$$
where $C_r(\Delta_{T}(F_{5n+1}))$ is a free abelian group generated by $r$-dimensional faces of $\Delta_{T}(F_{5n+1})$ with $r=0,1,2$ such that $n\geq 1$. The boundary homomorphism $\partial_i$ is a matrix of order $\alpha_{i-1}\times \alpha_i$ with $i=1, 2$, see Equation (\ref{eq1}).

Next, we present algorithms to find rank of matrices $\partial_i$ with $i=1, 2$, see Lemmas \ref{l3} and \ref{l4}.\\
Algorithm to Find the Rank of the Matrix $\partial_1$
\\1. Begin
\\2. Input the value of $n\geq 1$
\\3. BoundaryConditions1$=[\ ]$
\\4. $\ \ \ \ $ do $i$ from $1:+2:3*n$
\\5. $\ \ \ \ \ \ \ \ \ \ \ \ $ do $j$ from $3*n+1$ to $5*n+1$
\\6. $\ \ \ \ \ \ \ \ \ \ \ \ $ Eq1$=v[j]-v[i]$
\\7. $\ \ \ \ \ \ \ \ \ \ \ \ $ BoundaryConditions1$[i]=$Eq1
\\8. $\ \ \ \ \ \ \ \ \ \ \ \ $ end do
\\9. $\ \ \ \ $ end do
\\10. BoundaryConditions2$=[\ ]$
\\11. $\ \ \ \ $ do $i$ from $2:+3:3*n-1$
\\12. $\ \ \ \ $ Eq2$=v[5*n+1]-v[i]$
\\13. $\ \ \ \ $ BoundaryConditions2$[i]=$Eq2
\\14. $\ \ \ \ $ end do
\\15. BoundaryConditions3$=[\ ]$
\\16. $\ \ \ \ $ do $i$ from $3*n+1$ to $5*n$
\\17. $\ \ \ \ $ Eq3$=v[5*n+1]-v[i]$
\\18. $\ \ \ \ $ BoundaryConditions3$[i]=$Eq3
\\19. $\ \ \ \ $ end do
\\20. BoundaryConditions4$=[\ ]$
\\21. $\ \ \ \ $ do $i$ from $2$ to $3*n-1$
\\22. $\ \ \ \ \ \ \ \ \ \ \ \ $ do $j$ from $3*n+1:+2:5*n-1$
\\23. $\ \ \ \ \ \ \ \ \ \ \ \ $ Eq4$=v[j]-v[i]$
\\24. $\ \ \ \ \ \ \ \ \ \ \ \ $ BoundaryConditions4$[i]=$Eq4
\\25. $\ \ \ \ \ \ \ \ \ \ \ \ $ end do
\\26. $\ \ \ \ \ \ \ \ \ \ \ \ $ i=i+3
\\27. $\ \ \ \ \ \ \ \ \ \ \ \ $ break
\\28. $\ \ \ \ $ end do
\\29. BoundaryConditions5$=[\ ]$
\\30. $\ \ \ \ $ do $i$ from $2$ to $3*n-1$
\\31. $\ \ \ \ \ \ \ \ \ \ \ \ $ do $j$ from $3*n+2:+2:5*n$
\\32. $\ \ \ \ \ \ \ \ \ \ \ \ $ Eq5$=v[j]-v[i]$
\\33. $\ \ \ \ \ \ \ \ \ \ \ \ $ BoundaryConditions5$[i]=$Eq5
\\34. $\ \ \ \ \ \ \ \ \ \ \ \ $ end do
\\35. $\ \ \ \ \ \ \ \ \ \ \ \ $ i=i+3
\\36. $\ \ \ \ \ \ \ \ \ \ \ \ $ break
\\37. $\ \ \ \ $ end do
\\38. BoundaryConditions6$=[\ ]$
\\39. $\ \ \ \ $ do $i$ from $3*n+1$ to $5*n$
\\40. $\ \ \ \ \ \ \ \ \ \ \ \ $ do $j$ from $i+1$ to $5*n+1$
\\41. $\ \ \ \ \ \ \ \ \ \ \ \ $ Eq6$=v[j]-v[i]$
\\42. $\ \ \ \ \ \ \ \ \ \ \ \ $ BoundaryConditions6$[i]=$Eq6
\\43. $\ \ \ \ \ \ \ \ \ \ \ \ $ end do
\\44. $\ \ \ \ $ end do
\\45. BoundaryConditions7$=[\ ]$
\\46. $\ \ \ \ $ do $i$ from $1:+3:3*n-2$
\\47. $\ \ \ \ $ Eq7$=v[i+1]-v[i]$; $v[i+2]-v[i+1]$; $v[i+2]-v[i]$
\\48. $\ \ \ \ $ BoundaryConditions7$[i]=$Eq7
\\49. $\ \ \ \ $ end do
\\50. BoundaryConditions8$=[\ ]$
\\51. $\ \ \ \ $ do $i$ from $1:+3:3*n-5$
\\52. $\ \ \ \ \ \ \ \ \ \ \ \ $ do $j$ from $i+3:+3:3*n-2$
\\53. $\ \ \ \ \ \ \ \ \ \ \ \ $ Eq8$=v[j]-v[i]$
\\54. $\ \ \ \ \ \ \ \ \ \ \ \ $ BoundaryConditions8$[i]=$Eq8
\\55. $\ \ \ \ \ \ \ \ \ \ \ \ $ end do
\\56. $\ \ \ \ $ end do
\\57. BoundaryConditions9$=[\ ]$
\\58. $\ \ \ \ $ do $i$ from $1:+3:3*n-5$
\\59. $\ \ \ \ \ \ \ \ \ \ \ \ $ do $j$ from $i+5:+3:3*n$
\\60. $\ \ \ \ \ \ \ \ \ \ \ \ $ Eq9$=v[j]-v[i]$
\\61. $\ \ \ \ \ \ \ \ \ \ \ \ $ BoundaryConditions9$[i]=$Eq9
\\62. $\ \ \ \ \ \ \ \ \ \ \ \ $ end do
\\63. $\ \ \ \ $ end do
\\64. BoundaryConditions10$=[\ ]$
\\65. $\ \ \ \ $ do $i$ from $3:+3:3*n-3$
\\66. $\ \ \ \ \ \ \ \ \ \ \ \ $ do $j$ from $i+3:+3:3*n$
\\67. $\ \ \ \ \ \ \ \ \ \ \ \ $ Eq10$=v[j]-v[i]$
\\68. $\ \ \ \ \ \ \ \ \ \ \ \ $ BoundaryConditions10$[i]=$Eq10
\\69. $\ \ \ \ \ \ \ \ \ \ \ \ $ end do
\\70. $\ \ \ \ $ end do
\\71. BoundaryConditions11$=[\ ]$
\\72. $\ \ \ \ $ do $i$ from $3:+3:3*n-3$
\\73. $\ \ \ \ \ \ \ \ \ \ \ \ $ do $j$ from $i+1:+3:3*n-2$
\\74. $\ \ \ \ \ \ \ \ \ \ \ \ $ Eq11$=v[j]-v[i]$
\\75. $\ \ \ \ \ \ \ \ \ \ \ \ $ BoundaryConditions11$[i]=$Eq11
\\76. $\ \ \ \ \ \ \ \ \ \ \ \ $ end do
\\77. $\ \ \ \ $ end do
\\78. Find the matrix $\partial_1$ from BoundaryConditions1 to BoundaryConditions11
\\79. Evaluate the rank of $\partial_1$\\
Algorithm to Find the Rank of the Matrix $\partial_2$
\\1. Begin
\\2. Input the value of $n\geq 1$
\\3. BoundaryConditions12$=[\ ]$
\\4. $\ \ \ \ $ do $i$ from $1:+3:3*n-2$
\\5. $\ \ \ \ $ Eq12$=e[i,i+1]-e[i,i+2]+e[i+1,i+2]$
\\6. $\ \ \ \ $ BoundaryConditions12$[i]=$Eq12
\\7. $\ \ \ \ $ end do
\\8. BoundaryConditions13$=[\ ]$
\\9. $\ \ \ \ $ do $i$ from $1:+3:3*n-2$
\\10. $\ \ \ \ \ \ \ \ \ \ \ \ $ do $j$ from $3*n+1$ to $5*n$
\\11. $\ \ \ \ \ \ \ \ \ \ \ \ \ \ $ Eq13$=e[i,j]-e[i,5*n+1]+e[j,5*n+1]$
\\12. $\ \ \ \ \ \ \ \ \ \ \ \ $ BoundaryConditions13$[i]=$Eq13
\\13. $\ \ \ \ \ \ \ \ \ \ \ \ $ end do
\\14. $\ \ \ \ $ end do
\\15. BoundaryConditions14$=[\ ]$
\\16. $\ \ \ \ $ do $i$ from $3:+3:3*n$
\\17. $\ \ \ \ \ \ \ \ \ \ \ \ $ do $j$ from $3*n+1$ to $5*n$
\\18. $\ \ \ \ \ \ \ \ \ \ \ \ \ \ \ $ Eq14$=e[i,j]-e[i,5*n+1]+e[j,5*n+1]$
\\19. $\ \ \ \ \ \ \ \ \ \ \ \ $ BoundaryConditions14$[i]=$Eq14
\\20. $\ \ \ \ \ \ \ \ \ \ \ \ $ end do
\\21. $\ \ \ \ $ end do
\\22. BoundaryConditions15$=[\ ]$
\\23. $\ \ \ \ $ do $i$ from $1$ to $2*n-2$
\\24. $\ \ \ \ \ \ \ \ \ \ \ \ $ do $j$ from $i+1$ to $2*n-1$
\\25. $\ \ \ \ \ \ \ \ \ \ \ \ \ \ \ \ \ \ \ $ do $k$ from $j+1$ to $2*n$
\\26. $\ \ \ \ \ \ \ \ \ \ \ \ \ \ \ \ \ \ \ $ Eq15$=e[3*n+i,3*n+j]-
\\27. $\ \ \ \ \ \ \ \ \ \ \ \ \ \ \ \ \ \ \ \ \ $ e[3*n+i,3*n+k]+
\\28. $\ \ \ \ \ \ \ \ \ \ \ \ \ \ \ \ \ \ \ \ \ $ e[3*n+j,3*n+k]$
\\29. $\ \ \ \ \ \ \ \ \ \ \ \ \ \ \ \ \ \ \ $ BoundaryConditions15$[i]=$Eq15
\\30. $\ \ \ \ \ \ \ \ \ \ \ \ \ \ \ \ \ \ \ $ end do
\\31. $\ \ \ \ \ \ \ \ \ \ \ \ $ end do
\\32. $\ \ \ \ $ end do
\\33. BoundaryConditions15$=[\ ]$
\\34. $\ \ \ \ $ do $i$ from $1$ to $2*n-1$
\\35. $\ \ \ \ \ \ \ \ \ \ \ \ $ do $j$ from $i+1$ to $2*n$
\\36. $\ \ \ \ \ \ \ \ \ \ \ \ $ Eq15$=e[3*n+i,3*n+j]-e[3*n+i,5*n+1]+e[3*n+j,5*n+1]$
\\37. $\ \ \ \ \ \ \ \ \ \ \ \ $ BoundaryConditions15$[i]=$Eq15
\\38. $\ \ \ \ \ \ \ \ \ \ \ \ $ end do
\\39. $\ \ \ \ $ end do
\\40. BoundaryConditions17$=[\ ]$
\\41. BoundaryConditions18$=[\ ]$
\\42. $\ \ \ \ $ do $i$ from $1$ to $3*n-2$
\\43. $\ \ \ \ \ \ \ \ \ \ \ \ $ do $k$ from 1:2:2*n-1
\\44. $\ \ \ \ \ \ \ \ \ \ \ \ $ Eq17$=e[i,i+1]-e[i,3*n+k]+e[i+1,3*n+k]$
\\45. $\ \ \ \ \ \ \ \ \ \ \ \ $ BoundaryConditions17$[i]=$Eq17
\\46. $\ \ \ \ \ \ \ \ \ \ \ \ $ Eq18$=e[i,i+1]-e[i,3*n+k+1]+e[i+1,3*n+k+1]$
\\47. $\ \ \ \ \ \ \ \ \ \ \ \ $ BoundaryConditions18$[i]=$Eq18
\\48. $\ \ \ \ \ \ \ \ \ \ \ \ $ i=i+3
\\49. $\ \ \ \ \ \ \ \ \ \ \ \ $ end do
\\50. $\ \ \ \ $ break
\\51. $\ \ \ \ $ end do
\\52. BoundaryConditions15$=[\ ]$
\\53. BoundaryConditions16$=[\ ]$
\\54. $\ \ \ \ $ do $i$ from $2$ to $3*n-1$
\\55. $\ \ \ \ \ \ \ \ \ \ \ \ $ do $k$ from 1:2:2*n
\\56. $\ \ \ \ \ \ \ \ \ \ \ \ $ Eq15$=e[i,i+1]-e[i,3*n+k]+e[i+1,3*n+k]$
\\57. $\ \ \ \ \ \ \ \ \ \ \ \ $ BoundaryConditions15$[i]=$Eq15
\\58. $\ \ \ \ \ \ \ \ \ \ \ \ $ Eq16$=e[i,i+1]-e[i,3*n+k+1]+e[i+1,3*n+k+1]$
\\59. $\ \ \ \ \ \ \ \ \ \ \ \ $ BoundaryConditions16$[i]=$Eq16
\\60. $\ \ \ \ \ \ \ \ \ \ \ \ $ i=i+3
\\61. $\ \ \ \ \ \ \ \ \ \ \ \ $ end do
\\62. $\ \ \ \ $ break
\\63. $\ \ \ \ $ end do
\\64. BoundaryConditions17$=[\ ]$
\\65. $\ \ \ \ $ do $i$ from $1:3:3*n-2$
\\66. $\ \ \ \ \ \ $ Eq17$=e[i,i+1]-e[i,5*n+1]+e[i+1,5*n+1]$
\\67. $\ \ \ \ $ BoundaryConditions17$[i]=$Eq17
\\68. $\ \ \ \ $ i=i+3
\\69. $\ \ \ \ $ end do
\\70. BoundaryConditions18$=[\ ]$
\\71. $\ \ \ \ $ do $i$ from $2:3:3*n-1$
\\72. $\ \ \ \ \ \ $ Eq18$=e[i,i+1]-e[i,5*n+1]+e[i+1,5*n+1]$
\\73. $\ \ \ \ $ BoundaryConditions18$[i]=$Eq18
\\74. $\ \ \ \ $ end do
\\75. BoundaryConditions19$=[\ ]$
\\76. BoundaryConditions20$=[\ ]$
\\77. $\ \ \ \ $ do $i$ from $1:3:3*n-5$
\\78. $\ \ \ \ \ \ \ \ \ \ \ \ $ do $j$ from $i+3:3:3*n-2$
\\79. $\ \ \ \ \ \ \ \ \ \ \ \ $ Eq19$=e[j-1,5*n+1]-e[i,5*n+1]+e[i,j-1]$
\\80. $\ \ \ \ \ \ \ \ \ \ \ \ $ BoundaryConditions19$[i]=$Eq19
\\81. $\ \ \ \ \ \ \ \ \ \ \ \ \ \ \ $ Eq20$=e[j,5*n+1]-e[i,5*n+1]+e[i,j]$
\\82. $\ \ \ \ \ \ \ \ \ \ \ \ $ BoundaryConditions20$[i]=$Eq20
\\83. $\ \ \ \ \ \ \ \ \ \ \ \ $ end do
\\84. $\ \ \ \ $ end do
\\85. BoundaryConditions21$=[\ ]$
\\86. BoundaryConditions22$=[\ ]$
\\87. $\ \ \ \ $ do $i$ from $3:3:3*n-3$
\\88. $\ \ \ \ \ \ \ \ \ \ \ \ $ do $j$ from $i+1:3:3*n-2$
\\89. $\ \ \ \ \ \ \ \ \ \ \ \ \ \ \ \ \ \ \ $ if i\small{$\sim$}=j
\\90. $\ \ \ \ \ \ \ \ \ \ \ \ \ \ \ \ \ \ \ \ \ \ \ \ \ \ \ \ \ $ Eq21$=e[j,5*n+1]-e[i,5*n+1]+e[i,j]$
\\91. $\ \ \ \ \ \ \ \ \ \ \ \ \ \ \ \ \ \ \ \ \ \ $ BoundaryConditions21$[i]=$Eq21
\\92. $\ \ \ \ \ \ \ \ \ \ \ \ \ \ \ \ \ \ \ \ \ \ $ Eq22$=e[j+2,5*n+1]-e[i,5*n+1]+e[i,j+2]$
\\93. $\ \ \ \ \ \ \ \ \ \ \ \ \ \ \ \ \ \ \ \ \ $ BoundaryConditions22$[i]=$Eq22
\\94. $\ \ \ \ \ \ \ \ \ \ \ \ \ \ \ \ \ \ \ \ \ $ end if
\\95. $\ \ \ \ \ \ \ \ \ \ \ \ \ \ \ $ end do
\\96. $\ \ \ \ \ \ $ end do
\\97. BoundaryConditions23$=[\ ]$
\\98. $\ \ \ \ $ do $i$ from $1:3:3*n-2$
\\99. $\ \ \ \ \ $ Eq23$=e[i,3*n]-e[i,5*n+1]+e[3*n,5*n+1]$
\\100. $\ \ \ \ \ $ BoundaryConditions23$[i]=$Eq23
\\101. $\ \ \ \ \ $ end do
\\102. BoundaryConditions24$=[\ ]$
\\103. BoundaryConditions25$=[\ ]$
\\104. $\ \ \ \ $ do $i$ from $2$ to $3*n-1$
\\105. $\ \ \ \ \ \ \ \ \ \ \ \ $ do $j$ from $1:2:2*n-1$
\\106. $\ \ \ \ \ \ \ \ \ \ \ \ $ Eq24$=e[i,3*n+j]-e[i,5*n+1]+e[3*n+j,5*n+1]$
\\107. $\ \ \ \ \ \ \ \ \ \ \ \ $ BoundaryConditions24$[i]=$Eq24
\\108. $\ \ \ \ \ \ \ \ \ \ \ \ $ Eq25$=e[i,3*n+j+1]-e[i,5*n+1]+e[3*n+j+1,5*n+1]$
\\109. $\ \ \ \ \ \ \ \ \ \ \ \ $ BoundaryConditions25$[i]=$Eq25
\\110. $\ \ \ \ \ \ \ \ \ \ \ \ $ i=i+3
\\111. $\ \ \ \ \ \ \ \ \ \ \ \ $ end do
\\112. $\ \ \ \ $ break
\\113. $\ \ \ \ $ end do
\\114. BoundaryConditions26$=[\ ]$
\\115. $\ \ \ \ $ do $i$ from $2$ to $3*n-1$
\\116. $\ \ \ \ \ \ \ \ \ \ \ \ $ do $j$ from $1:2:2*n-1$
\\117. $\ \ \ \ \ \ \ \ \ \ \ \ \ \ \ \ \ \ \ $ do $k$ from $j+1$ to $2*n$
\\118. $\ \ \ \ \ \ \ \ \ \ \ \ \ \ \ \ \ \ \ $ Eq26$=e[i,3*n+j]-e[i,3*n+k]+
\\119. $\ \ \ \ \ \ \ \ \ \ \ \ \ \ \ \ \ \ \ \ \ $ e[3*n+j,3*n+k]$
\\120. $\ \ \ \ \ \ \ \ \ \ \ \ \ \ \ \ \ \ \ $ BoundaryConditions26$[i]=$Eq26
\\121. $\ \ \ \ \ \ \ \ \ \ \ \ \ \ \ \ \ \ \ $ end do
\\122. $\ \ \ \ \ \ \ \ \ \ \ \ $ i=i+3
\\123. $\ \ \ \ \ \ \ \ \ \ \ \ $ end do
\\124. $\ \ \ \ $ break
\\125. $\ \ \ \ $ end do
\\126. BoundaryConditions27$=[\ ]$
\\127. $\ \ \ \ $ do $i$ from $2$ to $3*n-1$
\\128. $\ \ \ \ \ \ \ \ \ \ \ \ $ do $j$ from $2:2:2*n$
\\129. $\ \ \ \ \ \ \ \ \ \ \ \ \ \ \ \ \ \ \ $ do $k$ from $j+1$ to $2*n$
\\130. $\ \ \ \ \ \ \ \ \ \ \ \ \ \ \ \ \ \ \ $ Eq27$=e[i,3*n+j]-e[i,3*n+k]+
\\131. $\ \ \ \ \ \ \ \ \ \ \ \ \ \ \ \ \ \ \ \ \ $ e[3*n+j,3*n+k]$
\\132. $\ \ \ \ \ \ \ \ \ \ \ \ \ \ \ \ \ \ \ $ BoundaryConditions27$[i]=$Eq27
\\133. $\ \ \ \ \ \ \ \ \ \ \ \ \ \ \ \ \ \ \ $ end do
\\134. $\ \ \ \ \ \ \ \ \ \ \ \ $ i=i+3
\\135. $\ \ \ \ \ \ \ \ \ \ \ \ $ end do
\\136. $\ \ \ \ $ break
\\137. $\ \ \ \ $ end do
\\138. BoundaryConditions28$=[\ ]$
\\139. $\ \ \ \ $ do $i$ from $1$ to $3*n-5$
\\140. $\ \ \ \ \ \ \ \ \ \ \ \ $ do $j$ from $0:2:2*n-4$
\\141. $\ \ \ \ \ \ \ \ \ \ \ \ \ \ \ \ \ \ \ $ do $k$ from $j+2$ to $2*n-1$
\\142. $\ \ \ \ \ \ \ \ \ \ \ \ \ \ \ \ \ \ \ $ Eq28$=e[3*n-i,5*n-k]-
\\143. $\ \ \ \ \ \ \ \ \ \ \ \ \ \ \ \ \ \ \ \ \ \ \ \ $e[3*n-i,5*n-j]+e[5*n-k,5*n-j]$
\\144. $\ \ \ \ \ \ \ \ \ \ \ \ \ \ \ \ \ \ \ $ BoundaryConditions28$[i]=$Eq28
\\145. $\ \ \ \ \ \ \ \ \ \ \ \ \ \ \ \ \ \ \ $ end do
\\146. $\ \ \ \ \ \ \ \ \ \ \ \ $ i=i+3
\\147. $\ \ \ \ \ \ \ \ \ \ \ \ $ end do
\\148. $\ \ \ \ $ break
\\149. $\ \ \ \ $ end do
\\150. BoundaryConditions29$=[\ ]$
\\151. $\ \ \ \ $ do $i$ from $1$ to $3*n-5$
\\152. $\ \ \ \ \ \ \ \ \ \ \ \ $ do $j$ from $1:2:2*n-3$
\\153. $\ \ \ \ \ \ \ \ \ \ \ \ \ \ \ \ \ \ \ $ do $k$ from $j+1$ to $2*n-1$
\\154. $\ \ \ \ \ \ \ \ \ \ \ \ \ \ \ \ \ \ \ $ Eq29$=e[3*n-i,5*n-k]-
\\155. $\ \ \ \ \ \ \ \ \ \ \ \ \ \ \ \ \ \ \ \ \ \ \ \ $ e[3*n-i,5*n-j]+e[5*n-k,5*n-j]$
\\156. $\ \ \ \ \ \ \ \ \ \ \ \ \ \ \ \ \ \ \ $ BoundaryConditions29$[i]=$Eq29
\\157. $\ \ \ \ \ \ \ \ \ \ \ \ \ \ \ \ \ \ \ $ end do
\\158. $\ \ \ \ \ \ \ \ \ \ \ \ $ i=i+3
\\159. $\ \ \ \ \ \ \ \ \ \ \ \ $ end do
\\160. $\ \ \ \ $ break
\\161. $\ \ \ \ $ end do
\\162. BoundaryConditions30$=[\ ]$
\\163. BoundaryConditions31$=[\ ]$
\\164. $\ \ \ \ $ do $i$ from $1$ to $3*n-2$
\\165. $\ \ \ \ \ \ \ \ \ \ \ \ $ do $j$ from $1:2:2*n-1$
\\166. $\ \ \ \ \ \ \ \ \ \ \ \ $ Eq30$=e[i,i+2]-e[i,3*n+j]+e[i+2,3*n+j]$
\\167. $\ \ \ \ \ \ \ \ \ \ \ \ $ BoundaryConditions30$[i]=$Eq30
\\168. $\ \ \ \ \ \ \ \ \ \ \ \ $ Eq31$=e[i,i+2]-e[i,3*n+j+1]+e[i+2,3*n+j+1]$
\\169. $\ \ \ \ \ \ \ \ \ \ \ \ $ BoundaryConditions31$[i]=$Eq31
\\170. $\ \ \ \ \ \ \ \ \ \ \ \ $ i=i+3
\\171. $\ \ \ \ \ \ \ \ \ \ \ \ $ end do
\\172. $\ \ \ \ $ break
\\173. $\ \ \ \ $ end do
\\174. BoundaryConditions32$=[\ ]$
\\175. $\ \ \ \ $ do $i$ from $1$ to $3*n-2$
\\176. $\ \ \ \ \ \ \ \ \ \ \ \ $ do $j$ from $1:2:2*n-1$
\\177. $\ \ \ \ \ \ \ \ \ \ \ \ \ \ \ \ \ \ \ $ do $k$ from $j+1$ to $2*n$
\\178. $\ \ \ \ \ \ \ \ \ \ \ \ \ \ \ \ \ \ \ $ Eq32$=e[i,3*n+j]-e[i,3*n+k]+e[3*n+j,3*n+k]$
\\179. $\ \ \ \ \ \ \ \ \ \ \ \ \ \ \ \ \ \ \ $ BoundaryConditions32$[i]=$Eq32
\\180. $\ \ \ \ \ \ \ \ \ \ \ \ \ \ \ \ \ \ \ $ end do
\\181. $\ \ \ \ \ \ \ \ \ \ \ \ $ i=i+3
\\182. $\ \ \ \ \ \ \ \ \ \ \ \ $ end do
\\183. $\ \ \ \ $ break
\\184. $\ \ \ \ $ end do
\\185. BoundaryConditions33$=[\ ]$
\\186. $\ \ \ \ $ do $i$ from $3$ to $3*n-3$
\\187. $\ \ \ \ \ \ \ \ \ \ \ \ $ do $j$ from $2:2:2*n-2$
\\188. $\ \ \ \ \ \ \ \ \ \ \ \ \ \ \ \ \ \ \ $ do $k$ from $j+1$ to $2*n$
\\189. $\ \ \ \ \ \ \ \ \ \ \ \ \ \ \ \ \ \ \ $ Eq33$=e[i,3*n+j]-e[i,3*n+k]+e[3*n+j,3*n+k]$
\\190. $\ \ \ \ \ \ \ \ \ \ \ \ \ \ \ \ \ \ \ $ BoundaryConditions33$[i]=$Eq33
\\191. $\ \ \ \ \ \ \ \ \ \ \ \ \ \ \ \ \ \ \ $ end do
\\192. $\ \ \ \ \ \ \ \ \ \ \ \ $ i=i+3
\\193. $\ \ \ \ \ \ \ \ \ \ \ \ $ end do
\\194. $\ \ \ \ $ break
\\195. $\ \ \ \ $ end do
\\196. BoundaryConditions34$=[\ ]$
\\197. $\ \ \ \ $ do $i$ from $2$ to $3*n-4$
\\198. $\ \ \ \ \ \ \ \ \ \ \ \ $ do $j$ from $1:2:2*n-3$
\\199. $\ \ \ \ \ \ \ \ \ \ \ \ \ \ \ \ \ \ \ $ do $k$ from $j+1$ to $2*n-1$
\\200. $\ \ \ \ \ \ \ \ \ \ \ \ \ \ \ \ \ \ \ $ Eq34$=e[3*n-i,5*n-k]-e[3*n-i,5*n-j]+e[5*n-k,5*n-j]$
\\201. $\ \ \ \ \ \ \ \ \ \ \ \ \ \ \ \ \ \ \ $ BoundaryConditions34$[i]=$Eq34
\\202. $\ \ \ \ \ \ \ \ \ \ \ \ \ \ \ \ \ \ \ $ end do
\\203. $\ \ \ \ \ \ \ \ \ \ \ \ $ i=i+3
\\204. $\ \ \ \ \ \ \ \ \ \ \ \ $ end do
\\205. $\ \ \ \ $ break
\\206. $\ \ \ \ $ end do
\\207. BoundaryConditions35$=[\ ]$
\\208. $\ \ \ \ $ do $i$ from $0$ to $3*n-3$
\\209. $\ \ \ \ \ \ \ \ \ \ \ \ $ do $j$ from $0:2:2*n-2$
\\210. $\ \ \ \ \ \ \ \ \ \ \ \ \ \ \ \ \ \ \ $ do $k$ from $j+1$ to $2*n-1$
\\211. $\ \ \ \ \ \ \ \ \ \ \ \ \ \ \ \ \ \ \ $ Eq35$=e[3*n-i,5*n-k]-e[3*n-i,5*n-j]+e[5*n-k,5*n-j]$
\\212. $\ \ \ \ \ \ \ \ \ \ \ \ \ \ \ \ \ \ \ $ BoundaryConditions35$[i]=$Eq35
\\213. $\ \ \ \ \ \ \ \ \ \ \ \ \ \ \ \ \ \ \ $ end do
\\214. $\ \ \ \ \ \ \ \ \ \ \ \ $ i=i+3
\\215. $\ \ \ \ \ \ \ \ \ \ \ \ $ end do
\\216. $\ \ \ \ $ break
\\217. $\ \ \ \ $ end do
\\218. Find the matrix $\partial_2$ from BoundaryConditions8 to BoundaryConditions35
\\219. Evaluate the rank of $\partial_2$

We can use MATLAB \cite{MAL} to see that the ranks of boundary homomorphisms $\partial_i$ with $i=1, 2$ are given as $rank(Im\ \partial_1)=5n$ and $rank(Im\ \partial_2)=10 n^2$. By dimension theorem of vector spaces, the nullities of $\partial_i$ with $i=1, 2$ can be expressed as $rank(Ker\ \partial_1)=\alpha_1-rank (Im\ \partial_1)=10 n^2$ and $rank(Ker\ \partial_2)=\alpha_2-rank(Im\ \partial_2)=\alpha_2-10 n^2$. Thus, the Betti numbers of $\Delta_{T}(F_{5n+1})$ are given by $(\beta_0,\beta_1,\beta_2,\beta_3,\ldots)=(1, 0, \frac{4n^{3}+12n^{2}+14n}{3}, 0, \ldots)$, see Equation (\ref{eq2}). Hence proved.
\endproof
\begin{Theorem}\label{t6}
The TSC associated to a family of friendship graphs $F_{5n+1}$ is CM.
\end{Theorem}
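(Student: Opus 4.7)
The plan is to deduce this theorem as an immediate consequence of the general characterization established in Theorem \ref{t3} together with the explicit homology computation carried out in Lemma \ref{t5}. The theorem gives a clean criterion: for a connected graph $G$, the TSC $\Delta_T(G)$ is CM if and only if $\widetilde{H}_1(\Delta_T(G); K) = 0$. Since Lemma \ref{t5} already establishes that $H_1(\Delta_T(F_{5n+1})) = 0$ for all $n \geq 1$, the rest is a matter of checking hypotheses.

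First I would verify that $F_{5n+1}$ is a connected graph. This is immediate from its description as a friendship (windmill) graph consisting of $n$ triangles all sharing a single common vertex: the common hub vertex ensures that any two vertices are joined by a path through the hub. Hence the connectedness hypothesis of Theorem \ref{t3} is satisfied.

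Next I would translate the homology computation of Lemma \ref{t5} into the language used by Theorem \ref{t3}. Lemma \ref{t5} gives $H_1(\Delta_T(F_{5n+1})) = 0$, and since reduced and unreduced homology agree in positive degrees, this is the same as $\widetilde{H}_1(\Delta_T(F_{5n+1}); K) = 0$. Applying Theorem \ref{t3} then yields the conclusion that $\Delta_T(F_{5n+1})$ is Cohen-Macaulay.

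The substantive content of the proof has in fact already been handled: the hard step is the vanishing of $H_1$, which was the content of Lemma \ref{t5} (obtained by identifying the facet structure via Lemma \ref{l3}, computing the $f$-vector via Lemma \ref{l4}, and determining the ranks of the boundary maps $\partial_1$ and $\partial_2$). The present theorem therefore requires nothing more than citing Theorem \ref{t3} and Lemma \ref{t5}, and the proof will be only a few lines long.
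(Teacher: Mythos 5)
Your proposal matches the paper's proof exactly: the paper also deduces the result by combining the vanishing of $H_1(\Delta_T(F_{5n+1}))$ from Lemma \ref{t5} with the criterion of Theorem \ref{t3}. Your additional remarks on verifying connectedness of $F_{5n+1}$ and identifying reduced with unreduced homology in positive degree are correct and only make explicit what the paper leaves implicit.
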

\proof
Since the first homology group of $\Delta_{T}(F_{5n+1})$ is trivial, see Lemma \ref{t5}. Therefore, $\Delta_{T}(F_{5n+1})$ is CM by Theorem \ref{t3}.
\endproof
\begin{Remark}
The TSC associated to a family of friendship graphs $F_{5n+1}$ is a bouquet of $(\frac{4n^{3}+12n^{2}+14n}{3})$ two dimensional spheres with $n\geq 1$.
\end{Remark}

\begin{Corollary}\label{c1}
The TSC associated to a family of friendship graphs $F_{5n+1}$ is unmixed.
\end{Corollary}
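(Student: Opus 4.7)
The plan is to invoke the standard implication that every Cohen--Macaulay simplicial complex is unmixed, a fact already cited in the introduction and used explicitly in Example \ref{e1} of this paper (see \cite{F1}). Thus the corollary is an immediate consequence of the preceding theorem.

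More concretely, I would proceed in two short steps. First, I would recall Theorem \ref{t6}, which establishes that $\Delta_T(F_{5n+1})$ is CM for every $n \geq 1$; the proof of that theorem relies on Lemma \ref{t5} (vanishing of the first reduced homology) together with the characterization in Theorem \ref{t3}. Second, I would apply the general fact that a CM simplicial complex is necessarily pure, and moreover that all its minimal vertex covers have the same cardinality (equivalently, the Stanley--Reisner ring $K[\Delta_T(F_{5n+1})]$ is unmixed because all minimal primes of $I_{\mathcal{N}}(\Delta_T(F_{5n+1}))$ have the same height). Combining the two gives the claim.

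There is essentially no obstacle here: the combinatorial content of unmixedness for $\Delta_T(F_{5n+1})$ is entirely subsumed by the algebraic fact that CM implies unmixed. The proof can therefore be stated in one or two lines, referencing Theorem \ref{t6} and the general CM $\Rightarrow$ unmixed implication from \cite{F1}. If one wanted a self-contained argument avoiding the CM route, one could instead exhibit the minimal vertex covers of $\Delta_T(F_{5n+1})$ explicitly and verify they all have the same size, but this would be substantially more laborious and unnecessary given Theorem \ref{t6}.
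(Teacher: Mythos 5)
Your proposal is correct and follows exactly the same route as the paper: cite Theorem \ref{t6} for the Cohen--Macaulayness of $\Delta_T(F_{5n+1})$ and then apply the standard fact from \cite{F1} that every CM simplicial complex is unmixed. The additional remarks about purity and heights of minimal primes are harmless elaboration but not needed.
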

\begin{proof}
We know that every CM simplicial complex is unmixed \cite{F1}. By Theorem \ref{t6}, $\Delta_{T}(F_{5n+1})$ is unmixed.   
\end{proof}

We compute the associated primes of the facet ideal $I_{\mathcal{F}}(\Delta_T(F_{5n+1}))$. 

\begin{Theorem}\label{t7}
	Let  $\Delta_{T}({F}_{5n+1})$ be TSC of a family of friendship graphs $F_{5n+1}$. Then the associated primes of  the facet ideal $I_\mathcal{F}(\Delta_{T}(F_{5n+1}))$  are given by\\
	$$I_\mathcal{F}(\Delta_{T}(F_{5n+1}))=\bigcap\limits_{r=1}^{10}( x_{1},\ldots,\hat x_{j_{r}},\ldots,x_{5n+1})$$
	for all $j_1\in\{3n+1,i\acute{}_1,i_2,i\acute{}_2,\ldots,i_n,i\acute{}_n\}$ and $i\acute{}_1\in\{1,2,3n+2\}$ with $i_k<i\acute{}_k\in\{3k-2,3k-1,3k\}$ such that $k=2,\ldots,n$;
	$j_2\in\{i_{1},3n+2,i_2,i\acute{}_2,\ldots,i_n,i\acute{}_n\}$  and  $i_{1}\in\{2,3\}$ with $i_k<i\acute{}_k\in\{3k-2,3k-1,3k\}$ such that $k=2,\ldots,n$;
	$j_3\in\{ i_{1},i\acute{}_1,\ldots, i_j,i\acute{}_j,\ldots,i_n,i\acute{}_n\}$ with corresponding values of $i_{1}=3n+1,1$ and $i\acute{}_1=3,3n+2$; corresponding values of $i_{j}=3n+2j-1,3j-2$ and $i\acute{}_j=3j,3n+2j$ such that $i_k<i\acute{}_k\in\{3k-2,3k-1,3k\}$, where $j,k=2,\ldots,n$ for $j\neq k$;
	$j_4\in\{i_{1},i\acute{}_1,\ldots,i_n,i\acute{}_n\}$ with corresponding values of $i_{1}=3n+1,1$ and $i\acute{}_1=3,3n+2$
	such that $i_k<i\acute{}_k\in\{3k-2,3k-1,3k\}$, where  $k=2,\ldots,n;$
	$j_5\in\{i_{1},i\acute{}_1,\ldots, i_j,i\acute{}_j,\ldots,i_n,i\acute{}_n\}$ with $i_{1}<i\acute{}_1\in\{1,2,3\}$, $i_j=3n+2j-1$, $i\acute{}_j\in\{3j-2,3j-1,3j,3n+2j\}$ and $i_k<i\acute{}_k\in\{3k-2,3k-1,3k\}$, where $j,k=2,\ldots,n$ such that $j\neq k$;
	$j_6\in\{i_{1},i\acute{}_1,\ldots,i_n,i\acute{}_n\}$ with $i_k<i\acute{}_k\in\{3k-2,3k-1,3k\}$ such that $k=1,\ldots,n$;
	$j_7\in\{ i_{1},i\acute{}_1,\ldots,i_j,i\acute{}_j,\ldots,i_l,i\acute{}_l,\ldots,i_n,i\acute{}_n\}$ with $i_{1}<i\acute{}_1\in\{1,2,3\}$, $i_j=3n+2j-1,i\acute{}_j=3j,i_l=3n+2l-1,i\acute{}_l=3l$ and  $i_k<i\acute{}_k\in\{3k-2,3k-1,3k\}$, where $j, k, l=2,\ldots,n$ such that $j<l$ and $j\neq k\neq l$;
	$j_8\in\{i_{1},i\acute{}_1,\ldots,i_j,i\acute{}_j,\ldots,i_l,i\acute{}_l,\ldots,i_n,i\acute{}_n\}$ with $i_{1}<i\acute{}_1\in\{1,2,3\}$, $i_j=3n+2j-1$, $i\acute{}_j=3j$, $i_l=3l-2$, $i\acute{}_l=3n+2l$ and $i_k<i\acute{}_k\in\{3k-2,3k-1,3k\}$, where $j, k, l=2,\ldots,n$ such that  $j\neq k\neq l$;
	$j_9\in\{i_{1},i\acute{}_1,\ldots,i_j,i\acute{}_j,\ldots,i_l,i\acute{}_l,\ldots,i_n,i\acute{}_n\}$ with $i_{1}<i\acute{}_1\in\{1,2,3\}$, $i_j=3j-2$, $i\acute{}_j=3n+2j$, $i_l=3l-2$, $i\acute{}_l=3n+2l$ and $i_k<i\acute{}_k\in\{3k-2,3k-1,3k\}$, where $j, k, l=2,\ldots,n$ such that  $j\neq k\neq l$ and $j<l$;
	$j_{10}\in\{i_{1},i\acute{}_1,\ldots,i_l,i\acute{}_l,\ldots,i_n,i\acute{}_n\}$ with $i_{1}<i\acute{}_1\in\{1,2,3\}$, $i_l\in\{3l-2,3l-1,3l\}$, $i\acute{}_l=3n+2l$ and  $i_k<i\acute{}_k\in\{3k-2,3k-1,3k\}$, where $l\in\{2,\ldots,n\}$ and $k=2,\ldots,n$ such that  $k\neq l$.
\end{Theorem}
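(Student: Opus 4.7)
The strategy is to use the standard correspondence between associated primes of a squarefree monomial ideal and minimal vertex covers of the associated simplicial complex: the associated primes of $I_\mathcal{F}(\Delta_T(F_{5n+1}))$ are precisely the ideals $P_C = (x_i : i \in C)$ where $C$ ranges over the minimal vertex covers of $\Delta_T(F_{5n+1})$. Since $\Delta_T(F_{5n+1})$ is unmixed by Corollary~\ref{c1}, all minimal vertex covers share a common cardinality, which a direct count together with the facet enumeration of Lemma~\ref{l3} shows to be $3n+1$. Hence the complements $J = [5n+1]\setminus C$ all have size $2n$, and the problem reduces to enumerating those maximal $2n$-element subsets $J \subseteq [5n+1]$ that contain no facet of $\Delta_T(F_{5n+1})$ and packaging them into the ten families $J_1,\ldots,J_{10}$ appearing in the theorem.

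The enumeration is driven by the natural block decomposition of the vertex set of $\Delta_T(F_{5n+1})$ inherited from the labeling in Figure~\ref{fig:f1}: each triangle of $F_{5n+1}$ contributes a block of five labels (the three edge indices $\{3k-2,3k-1,3k\}$ together with the two outer-vertex indices $\{3n+2k-1,3n+2k\}$), while $5n+1$ is the shared central vertex. A pigeonhole argument combined with the edge-triple facets $\sigma_{3k-2,3k-1,3k}$ from family~(1) of Lemma~\ref{l3} shows that $J$ must meet each block in at most two labels. Using also the star-type facets of families~(2), (4), (6) and the mixed facets of~(5), (8), (11), (12), one sees that the admissible intersection of $J$ with a ``generic'' block $k$ is precisely a pair $(i_k, i_k')$ with $i_k < i_k' \in \{3k-2,3k-1,3k\}$ (two of the triangle's edges), while one or two ``special'' blocks may contribute alternative pairs of the form $(3j-2, 3n+2j)$ or $(3n+2j-1, 3j)$, and the first block together with the handling of $5n+1$ produces further exceptional configurations. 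The ten families $r=1,\ldots,10$ in the theorem correspond exactly to the distinct combinatorial patterns: whether $5n+1\in J$, which elements of block~$1$ lie in $J$, and whether there is a single special block $j$ or two special blocks $j<l$.

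For each of the ten families one must then verify two things: (i) the prescribed $J$ contains no facet from any of the thirteen facet families of Lemma~\ref{l3}, so that $C=[5n+1]\setminus J$ is a vertex cover; and (ii) for every $v\notin J$, there exists a facet of $\Delta_T(F_{5n+1})$ contained in $J\cup\{v\}$, so that $C$ is minimal. The claimed primary decomposition then follows from the general identity $I_\mathcal{F}(\Delta)=\bigcap_C P_C$ for squarefree monomial ideals applied to the enumeration just produced.

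The principal obstacle is the combinatorial bookkeeping. With ten families of $J$ to check against thirteen facet families for the avoidance property, and with the minimality check requiring that each of the $3n+1$ elements of $C$ be witnessed by a facet it uniquely hits, the argument is dominated by careful case analysis; the difficulty peaks in families $r=7,8,9$, which carry two special block-indices $j<l$ and whose boundary behaviour (when $j$ or $l$ abuts the first or last triangle) must be treated separately. An inductive strategy on $n$, exploiting the $S_n$-symmetry of $F_{5n+1}$ permuting the triangles to reduce to a single generic triangle at a time, can streamline the verification substantially, but the inductive step must be set up carefully to account for the cross-triangle facets in families~(3), (9), (10), (12), (13) of Lemma~\ref{l3}, which couple blocks and thus do not decouple under the naive symmetry reduction.
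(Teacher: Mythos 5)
Your proposal follows essentially the same route as the paper: both invoke the one-to-one correspondence between the minimal primes of the facet ideal $I_{\mathcal{F}}(\Delta_T(F_{5n+1}))$ and the minimal vertex covers of $\Delta_T(F_{5n+1})$ (Faridi, Proposition 1.8), and then reduce the theorem to an explicit case-by-case enumeration of those covers organized into the same ten families. Your reformulation via complements of size $2n$ and the block decomposition by triangles is a repackaging of that enumeration, and, like the paper (which writes out only the first family and asserts the remaining nine are similar), you leave the bulk of the combinatorial verification --- that each listed set is a cover, that it is minimal, and that the list is exhaustive --- as bookkeeping to be carried out.
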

\begin{proof} 
	A minimal prime ideal of the facet ideal $I_{\mathcal{F}}(\Delta)$ has a one-to-one correspondence with the minimal vertex cover of the simplicial complex, according to \cite{F1}, Proposition 1.8. Therefore, it is sufficient to compute all the minimal vertex covers of $\Delta_T(F_{5n+1})$ in order to compute the primary decomposition of the facet ideal $I_{\mathcal{F}}(\Delta_T(F_{5n+1}))$. We find all possible minimal vertex covers of $\Delta_{T}({F}_{5n+1})$ into $r$ steps with $r=1,\ldots,10$. The minimal vertex covers of $\Delta_{T}({F}_{5n+1})$ in first step can be expressed as $( x_{1},\ldots,\hat x_{j_1},\ldots,x_{5n+1})$ for all
	$j_1\in\{3n+1,1,4,5,\ldots,3n-5,3n-4,3n-2,3n-1\}$; 
	$j_1\in\{3n+1,1,4,5,\ldots,3n-5,3n-4,3n-2,3n\}$;
	$j_1\in\{3n+1,1,4,5,\ldots,3n-5,3n-4,3n-1,3n\}$;
	$j_1\in\{3n+1,1,4,5,\ldots,3n-5,3n-3,3n-2,3n-1\}$; 
	$j_1\in\{3n+1,1,4,5,\ldots,3n-5,3n-3,3n-2,3n\}$;
	$j_1\in\{3n+1,1,4,5,\ldots,3n-5,3n-3,3n-1,3n\}$; 
	$j_1\in\{3n+1,1,4,5,\ldots,3n-4,3n-3,3n-2,3n-1\}$; 
	$j_1\in\{3n+1,1,4,5,\ldots,3n-4,3n-3,3n-2,3n\}$; 
	$j_1\in\{3n+1,1,4,5,\ldots,3n-4,3n-3,3n-1,3n\}$;\ldots; 
	$j_1\in\{3n+1,1,4,6,\ldots,3n-5,3n-4,3n-2,3n-1\}$; 
	$j_1\in\{3n+1,1,4,6,\ldots,3n-5,3n-4,3n-2,3n\}$; 
	$j_1\in\{3n+1,1,4,6,\ldots,3n-5,3n-4,3n-1,3n\}$; 
	$j_1\in\{3n+1,1,4,6,\ldots,3n-5,3n-3,3n-2,3n-1\}$; 
	$j_1\in\{3n+1,1,4,6,\ldots,3n-5,3n-3,3n-2,3n\}$; 
	$j_1\in\{3n+1,1,4,6,\ldots,3n-5,3n-3,3n-1,3n\}$; 
	$j_1\in\{3n+1,1,4,6,\ldots,3n-4,3n-3,3n-2,3n-1\}$; 
	$j_1\in\{3n+1,1,4,6,\ldots,3n-4,3n-3,3n-2,3n\}$; 
	$j_1\in\{3n+1,1,4,6,\ldots,3n-4,3n-3,3n-1,3n\}$; 
	$j_1\in\{3n+1,1,5,6,\ldots,3n-5,3n-4,3n-2,3n-1\}$; 
	$j_1\in\{3n+1,1,5,6,\ldots,3n-5,3n-4,3n-2,3n\}$; 
	$j_1\in\{3n+1,1,5,6,\ldots,3n-5,3n-4,3n-1,3n\}$; 
	$j_1\in\{3n+1,1,5,6,\ldots,3n-5,3n-3,3n-2,3n-1\}$; 
	$j_1\in\{3n+1,1,5,6,\ldots,3n-5,3n-3,3n-2,3n\}$; 
	$j_1\in\{3n+1,1,5,6,\ldots,3n-5,3n-3,3n-1,3n\}$; 
	$j_1\in\{3n+1,1,5,6,\ldots,3n-4,3n-3,3n-2,3n-1\}$; 
	$j_1\in\{3n+1,1,5,6,\ldots,3n-4,3n-3,3n-2,3n\}$; 
	$j_1\in\{3n+1,1,5,6,\ldots,3n-4,3n-3,3n-1,3n\}$; 
	$j_1\in\{3n+1,2,4,5,\ldots,3n-5,3n-4,3n-2,3n-1\}$; 
	$j_1\in\{3n+1,2,4,5,\ldots,3n-5,3n-4,3n-2,3n\}$; 
	$j_1\in\{3n+1,2,4,5,\ldots,3n-5,3n-4,3n-1,3n\}$; 
	$j_1\in\{3n+1,2,4,5,\ldots,3n-5,3n-3,3n-2,3n-1\}$; 
	$j_1\in\{3n+1,2,4,5,\ldots,3n-5,3n-3,3n-2,3n\}$; 
	$j_1\in\{3n+1,2,4,5,\ldots,3n-5,3n-3,3n-1,3n\}$; 
	$j_1\in\{3n+1,2,4,5,\ldots,3n-4,3n-3,3n-2,3n-1\}$; 
	$j_1\in\{3n+1,2,4,5,\ldots,3n-4,3n-3,3n-2,3n\}$; 
	$j_1\in\{3n+1,2,4,5,\ldots,3n-4,3n-3,3n-1,3n\}$; 
	$j_1\in\{3n+1,2,4,6,\ldots,3n-5,3n-4,3n-2,3n-1\}$; 
	$j_1\in\{3n+1,2,4,6,\ldots,3n-5,3n-4,3n-2,3n\}$; 
	$j_1\in\{3n+1,2,4,6,\ldots,3n-5,3n-4,3n-1,3n\}$; 
	$j_1\in\{3n+1,2,4,6,\ldots,3n-5,3n-3,3n-2,3n-1\}$; 
	$j_1\in\{3n+1,2,4,6,\ldots,3n-5,3n-3,3n-2,3n\}$; 
	$j_1\in\{3n+1,2,4,6,\ldots,3n-5,3n-3,3n-1,3n\}$; 
	$j_1\in\{3n+1,2,4,6,\ldots,3n-4,3n-3,3n-2,3n-1\}$; 
	$j_1\in\{3n+1,2,4,6,\ldots,3n-4,3n-3,3n-2,3n\}$; 
	$j_1\in\{3n+1,2,4,6,\ldots,3n-4,3n-3,3n-1,3n\}$; 
	$j_1\in\{3n+1,2,5,6,\ldots,3n-5,3n-4,3n-2,3n-1\}$; 
	$j_1\in\{3n+1,2,5,6,\ldots,3n-5,3n-4,3n-2,3n\}$; 
	$j_1\in\{3n+1,2,5,6,\ldots,3n-5,3n-4,3n-1,3n\}$; 
	$j_1\in\{3n+1,2,5,6,\ldots,3n-5,3n-3,3n-2,3n-1\}$; 
	$j_1\in\{3n+1,2,5,6,\ldots,3n-5,3n-3,3n-2,3n\}$; 
	$j_1\in\{3n+1,2,5,6,\ldots,3n-5,3n-3,3n-1,3n\}$; 
	$j_1\in\{3n+1,2,5,6,\ldots,3n-4,3n-3,3n-2,3n-1\}$; 
	$j_1\in\{3n+1,2,5,6,\ldots,3n-4,3n-3,3n-2,3n\}$; 
	$j_1\in\{3n+1,2,5,6,\ldots,3n-4,3n-3,3n-1,3n\}$; 
	$j_1\in\{3n+1,3n+2,4,5,\ldots,3n-5,3n-4,3n-2,3n-1\}$; 
	$j_1\in\{3n+1,3n+2,4,5,\ldots,3n-5,3n-4,3n-2,3n\}$; 
	$j_1\in\{3n+1,3n+2,4,5,\ldots,3n-5,3n-4,3n-1,3n\}$; 
	$j_1\in\{3n+1,3n+2,4,5,\ldots,3n-5,3n-3,3n-2,3n-1\}$; 
	$j_1\in\{3n+1,3n+2,4,5,\ldots,3n-5,3n-3,3n-2,3n\}$; 
	$j_1\in\{3n+1,3n+2,4,5,\ldots,3n-5,3n-3,3n-1,3n\}$; 
	$j_1\in\{3n+1,3n+2,4,5,\ldots,3n-4,3n-3,3n-2,3n-1\}$; 
	$j_1\in\{3n+1,3n+2,4,5,\ldots,3n-4,3n-3,3n-2,3n\}$; 
	$j_1\in\{3n+1,3n+2,4,5,\ldots,3n-4,3n-3,3n-1,3n\}$; 
	$j_1\in\{3n+1,3n+2,4,6,\ldots,3n-5,3n-4,3n-2,3n-1\}$; 
	$j_1\in\{3n+1,3n+2,4,6,\ldots,3n-5,3n-4,3n-2,3n\}$; 
	$j_1\in\{3n+1,3n+2,4,6,\ldots,3n-5,3n-4,3n-1,3n\}$; 
	$j_1\in\{3n+1,3n+2,4,6,\ldots,3n-5,3n-3,3n-2,3n-1\}$; 
	$j_1\in\{3n+1,3n+2,4,6,\ldots,3n-5,3n-3,3n-2,3n\}$; 
	$j_1\in\{3n+1,3n+2,4,6,\ldots,3n-5,3n-3,3n-1,3n\}$; 
	$j_1\in\{3n+1,3n+2,4,6,\ldots,3n-4,3n-3,3n-2,3n-1\}$; 
	$j_1\in\{3n+1,3n+2,4,6,\ldots,3n-4,3n-3,3n-2,3n\}$; 
	$j_1\in\{3n+1,3n+2,4,6,\ldots,3n-4,3n-3,3n-1,3n\}$; 
	$j_1\in\{3n+1,3n+2,5,6,\ldots,3n-5,3n-4,3n-2,3n-1\}$; 
	$j_1\in\{3n+1,3n+2,5,6,\ldots,3n-5,3n-4,3n-2,3n\}$; 
	$j_1\in\{3n+1,3n+2,5,6,\ldots,3n-5,3n-4,3n-1,3n\}$; 
	$j_1\in\{3n+1,3n+2,5,6,\ldots,3n-5,3n-3,3n-2,3n-1\}$; 
	$j_1\in\{3n+1,3n+2,5,6,\ldots,3n-5,3n-3,3n-2,3n\}$; 
	$j_1\in\{3n+1,3n+2,5,6,\ldots,3n-5,3n-3,3n-1,3n\}$; 
	$j_1\in\{3n+1,3n+2,5,6,\ldots,3n-4,3n-3,3n-2,3n-1\}$; 
	$j_1\in\{3n+1,3n+2,5,6,\ldots,3n-4,3n-3,3n-2,3n\}$; 
	$j_1\in\{3n+1,3n+2,5,6,\ldots,3n-4,3n-3,3n-1,3n\}$.
	Thus, the minimal vertex covers of $\Delta_{T}({F}_{5n+1})$ in Step 1 can be written as
	$( x_{1},\ldots,\hat x_{j_1},\ldots, x_{5n+1})$ for all  $j_1\in\{3n+1,i\acute{}_1,i_2,i\acute{}_2,\ldots,i_n,i\acute{}_n\}$ with $i\acute{}_1\in\{1,2,3n+2\}$ and $i_k<i\acute{}_k\in\{3k-2,3k-1,3k\}$ such that $k=2,\ldots,n$. One can establish the other steps $r=2,\ldots,10$ in a similar manner. Hence proved.
\end{proof}

\begin{Proposition}
	Let $\Delta_{T}({F}_{5n+1})$ be TSC associated to a family of friendship graphs $F_{5n+1}$. Then the cardinality of each minimal vertex cover of $\Delta_{T}({F}_{5n+1})$ is $3n+1$.The number of minimal vertex covers of $\Delta_{T}({F}_{5n+1})$ is $3^{n-2}(2n^{2}+19n+9)$.
\end{Proposition}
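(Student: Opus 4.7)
The plan is to read off both assertions from Theorem~\ref{t7} and Corollary~\ref{c1}: the ten families of minimal vertex covers listed in Theorem~\ref{t7} already parametrize every minimal vertex cover of $\Delta_T(F_{5n+1})$, so it remains only to (a) extract the common cardinality and (b) tally each family and sum.

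For the cardinality, I would invoke Corollary~\ref{c1}: since $\Delta_T(F_{5n+1})$ is unmixed, every minimal vertex cover has the same size, so it suffices to compute $|S|$ for a single family, where $S=\{j_r\}$ is the set of indices excluded from the cover. Taking Family~1, $S=\{3n+1,i'_1,i_2,i'_2,\ldots,i_n,i'_n\}$ consists of $1+1+2(n-1)=2n$ pairwise distinct indices in $[5n+1]$, giving cardinality $(5n+1)-2n=3n+1$, which matches and agrees across families by direct inspection.

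For the count, I would apply the product rule family by family. The guiding observation is that each pair $(i_k,i'_k)\in\binom{\{3k-2,3k-1,3k\}}{2}$ contributes a factor of $\binom{3}{2}=3$ for every ``free'' index $k\in\{2,\ldots,n\}$, while each family imposes a constraint on a distinguished position (the ``$1$'' slot, or an additional label $j$, or an ordered/unordered pair $(j,l)$). After factoring out $3^{n-2}$ from each contribution, Family~1 gives $9$, Family~2 gives $6$, Family~3 gives $4(n-1)$, Family~4 gives $6$, Family~5 gives $12(n-1)$, Family~6 gives $9$, Families~7 and~9 each give $\binom{n-1}{2}$, Family~8 gives $(n-1)(n-2)$, and Family~10 gives $9(n-1)$. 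Summing and simplifying yields
\[
30+25(n-1)+2(n-1)(n-2)=2n^{2}+19n+9,
\]
so the grand total is $3^{n-2}(2n^{2}+19n+9)$, as asserted.

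The principal obstacle will be verifying that the ten families are pairwise disjoint, so that no minimal vertex cover is double-counted. This requires identifying a distinguishing signature in the excluded set $\{j_r\}$ for each family: for instance, Family~1 always contains the index $3n+1$ with $i'_1\in\{1,2,3n+2\}$, whereas Family~2 contains $3n+2$ but never $3n+1$, and Families~3--5 all include at least one ``crossing'' index of the form $3n+2j$ or $3n+2j-1$ paired with a triangle-vertex label in a way that none of Families~1, 2, 6 permit. A careful enumeration of these signatures confirms the families are disjoint; the remaining content is the routine factorial/binomial bookkeeping sketched above together with the elementary identity verifying the sum $30+25(n-1)+2(n-1)(n-2)=2n^{2}+19n+9$.
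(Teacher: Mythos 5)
Your proposal follows essentially the same route as the paper: deduce the common cardinality $3n+1$ from unmixedness (Corollary \ref{c1}) together with the explicit form of one family in Theorem \ref{t7}, and then count the ten families by the product rule, with your per-family tallies ($9,6,4(n-1),6,12(n-1),9,\binom{n-1}{2},(n-1)(n-2),\binom{n-1}{2},9(n-1)$ in units of $3^{n-2}$) matching the paper's term by term before summing to $3^{n-2}(2n^{2}+19n+9)$. Your added attention to the pairwise disjointness of the ten families is a point the paper leaves implicit, but it does not change the argument.
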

\begin{proof}
Note that the cardinality of each minimal vertex cover of $\Delta_{T}({F}_{5n+1})$ is $3n+1$, see Definition \ref{dd} and Corollary \ref{c1}. One can see from Theorem \ref{t7} that the total number of minimal vertex covers of $\Delta_{T}({F}_{5n+1})$ in $r$ steps with $r=1,\ldots,10$ is given by\\
	$3^{n}+2.3^{n-1}+4(n-1)3^{n-2}+2.3^{n-1}+12(n-1)3^{n-2}+3^n+(1+\ldots+n-2)3^{n-2}+2(1+\ldots+n-2)3^{n-2}+(1+\ldots+n-2)3^{n-2}+(n-1)3^{n}
	=3^{n}+2.3^{n-1}+4(n-1)3^{n-2}+2.3^{n-1}+12(n-1)3^{n-2}+3^n+\frac{1}{2}(n-1)(n-2)3^{n-2}+(n-1)(n-2)3^{n-2}+\frac{1}{2}(n-1)(n-2)3^{n-2}+(n-1)3^{n}
	=9.3^{n-2}+6.3^{n-2}+4(n-1)3^{n-2}+6.3^{n-2}+12(n-1)3^{n-2}+9.3^{n-2}+\frac{1}{2}(n-1)(n-2)3^{n-2}+(n-1)(n-2)3^{n-2}+\frac{1}{2}(n-1)(n-2)3^{n-2}+9(n-1)3^{n-2}
	=3^{n-2}(2n^{2}+19n+9)$. Hence the result.
\end{proof}
{\bf Acknowledgement:}
The Higher Education Commission of Pakistan is acknowledged by the authors for its partial support.
%
%
{\small
}

\end{document}